\theoremstyle{definition}
\newtheorem{ex}{Example}[section]
\newtheorem*{ex*}{Example}
\newtheorem{question}{Question}
\theoremstyle{plain}
\newtheorem{thm}{Theorem}[section]
\newtheorem{prop}[thm]{Proposition}
\newtheorem*{cor*}{Corollary}
\newtheorem*{claim}{Claim}
\newtheorem{thm*}{Theorem}
\newtheorem*{thmA}{Theorem A}
\newtheorem*{thmA'}{Theorem A'}
\newtheorem*{thmB'}{Theorem B'}
\newtheorem*{thmB}{Theorem B}
\theoremstyle{remark}
\newtheorem*{rem}{Remark}
\numberwithin{equation}{section}
\DeclareMathOperator{\red}{red}
\DeclareMathOperator{\id}{id}
\DeclareMathOperator{\codim}{codim}
\DeclareMathOperator{\Hom}{Hom}
\DeclareMathOperator{\PGL}{PGL}
\DeclareMathOperator{\GL}{GL}
\DeclareMathOperator{\diag}{diag}
\def\C{\mathbb{C}}
\def\P{\mathbb{P}}
\def\Z{\mathbb{Z}}
\def\P{\mathbb{P}}
\def\O{{\mathcal{O}}}
\def\cA{{\mathcal{A}}}
\def\frm{{\mathfrak{m}}}
\def\d{{\delta}}
\def\n0{{\bf n_0}}
\def\la{{\lambda}}
\def\dto{\dashrightarrow}
\begin{document}

\title[Rational Maps Induced from Algebraic Structures]
{Degree Growth of Rational Maps Induced from Algebraic Structures}

\author{Charles Favre}
\address{CMLS, \'Ecole polytechnique, CNRS, Universit\'e Paris-Saclay, 91128 Palaiseau Cedex, France}
\email{charles.favre@polytechnique.edu}
\author{Jan-Li  Lin}
\address{Department of Mathematics\\
	Northwestern University\\
	Evanston, IL 60208}
\email{janlin@math.northwestern.edu}

\thanks{First author is supported by the ERC-starting grant project "Nonarcomp" no.307856.}

\begin{abstract}
For a finite dimensional vector space equipped with a $\mathbb C$-algebra structure, one can define rational maps using the algebraic structure.  In this paper, we describe the growth of the degree sequences for this type of rational maps.
\end{abstract}

\date{\today}

\maketitle
\tableofcontents

\section{Introduction}

In higher dimensional complex dynamics, understanding the degree growth of a rational map under iteration is a fundamental and important
issue. Given any dominant rational map $f:\P^d\dashrightarrow\P^d$, one can define its $p$-th degree
$\deg_p(f):=\deg(f^{-1} L_p)$, where $L_p$ denotes a generic linear subspace of $\P^d$ of codimension $p$. 
The problem is to describe  the behavior of the sequence $\{\deg_p(f^n)\}_{n=1}^\infty$, especially as $n\to\infty$. It is not difficult to check that this sequence is sub-multiplicative. Following~\cite{MR1488341} one can therefore introduce a numerical invariant which measures the exponential growth of the degree
\[
\lambda_p(f):=\lim_{n\to\infty} \deg_p(f^n)^{1/n} \ge 1.
\]
These are invariants of birational conjugacy that are usually referred to as  the {\em dynamical degrees}. 

Understanding the degrees of iterates or computing the dynamical degrees of a rational selfmap are not easy tasks.
Most results have been focused on the case $p = 1$, 
see \cite{DF,FJ,AABM,AMV,BK1,BK2,BHM,Ng} and the references therein.
On the other hand, the case $2 \le p \le d-2$ is substantially harder since it is 
delicate to compute  $\deg_p(f)$ even in concrete examples, and
there are only a few references in the
literature.
For monomial maps, the degree growth is obtained in \cite{FW, L}. Other family of maps
have also been investigated, see \cite{Og,DN}. 

\smallskip

In this paper, we study the behavior and the degree growth of rational maps
which come from an algebra structure. 
One motivation to study such maps comes from the recent 
 classification of birational maps of type $(2,2)$
on $\P^d$ in terms of involutions on certain Jordan algebra by L. Pirio and F. Russo~\cite{PiRu1, PiRu2,PiRu3}.  
There are also some sporadic studies in the literature about the dynamics of this type of maps,
for example, see \cite{CeDe, Usn} which study the dynamics of ratioanl maps on the matrix algebras. In this paper we study the algebraic structure of these maps more systematically and describe the growth of their degrees.

\smallskip

Our starting point is a finite dimensional complex vector space $V$
equipped with a $\C$-algebra structure. 
We shall be mainly interested in the dynamics of two classes of maps. 

%

The first family of maps we deal with are the maps induced from single variable rational map.
Here we need to assume that the algebra is {\em power associative} and has a {\em multiplicative unit} (see Section~\ref{sec:setting}
for definitions). This implies that the power map $x\mapsto x^n$, $n\ge 0$ is well-defined, where $x^n$ is the multiplication in $V$ of $x$
with itself for $n$ times in any order. As a consequence, for a single variable polynomial $P(T)\in\C[T]$, $P$ also induces a self map on $V$.
Furthermore, one can show that a generic element of $V$ is invertible. Therefore,  any rational function
$\varphi=\frac{Q}{P}\in\C(T)$, $\varphi$ induces a rational selfmap $f_\varphi:V\dashrightarrow V$. Compactifying $V$, we obtain
a rational map $f_\varphi:\P^d\dashrightarrow \P^d$ where $d=\dim_\C(V)$.

\begin{thmA}
Suppose $V$ is a power associative $\C$-algebra with a multiplicative unit.
Then there exists an integer $k\ge 1$ such that 
for any rational map
$\varphi(T)=\frac {Q(T)}{P(T)}\in \C(T)$ and for  any integer $p \ge1$, we have
$$
\deg_p (f_\varphi^n)  \asymp \left( \max\{ \deg(Q), \deg(P)\}^{\min \{ p, k \}}\right)^n.
$$
\end{thmA}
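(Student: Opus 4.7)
The plan exploits two observations: that iterating $f_\varphi$ corresponds to iterating $\varphi$ in $\C(T)$, and that every orbit of $f_\varphi$ is trapped inside the commutative subalgebra $\C[v]\subset V$. Since $V$ is power associative with unit, each $\C[v]$ is a commutative associative subalgebra, so $f_\varphi$ restricts there to the formal rational function $\varphi$ in the variable $v$. A direct consequence is the identity $f_\varphi^n=f_{\varphi^n}$, where $\varphi^n$ denotes the $n$-fold composition in $\C(T)$. Since $\deg(\varphi^n)=(\deg\varphi)^n$ on $\P^1$, the theorem reduces to proving
$$\deg_p(f_\psi)\asymp \deg(\psi)^{\min(p,k)}$$
for every rational $\psi\in\C(T)$, with implied constants depending only on $V$ and $p$.

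To pin down $k$, I identify it as the generic number of primitive idempotents in $\C[v]$, equivalently the number of distinct roots of the minimal polynomial of a generic $v\in V$. On a Zariski dense open $U\subset V$ one has
$$\C[v]\cong \prod_{i=1}^k R_i,\qquad R_i=\C[T]/((T-\alpha_i(v))^{e_i}),$$
with $R_i$ local Artinian of dimension $e_i$, and the integer $k$ constant on $U$. The rational map $\pi\colon V\dashrightarrow B$ sending $v$ to the subalgebra $\C[v]\in \gras(\tilde k,V)$ (with $\tilde k=\sum_i e_i$) defines an algebraic fibration, and since $f_\psi(v)=Q(v)P(v)^{-1}$ lies in $\C[v]$, $f_\psi$ preserves every fiber of $\pi$ and acts as the identity on the base $B$.

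The fiberwise dynamics split as $f_\psi|_{\C[v]}=\prod_{i=1}^k f_\psi^{(i)}$, where $f_\psi^{(i)}\colon R_i\to R_i$ is induced by the substitution $T\mapsto \psi(T)$. The crucial fact is that $f_\psi^{(i)}$ is semiconjugate, via the constant-term projection $R_i\to R_i/\mathfrak{m}_i\cong\C$, to the one-dimensional rational map $\psi\colon\P^1\to\P^1$; expanding in Taylor series about $\alpha_i$ shows that $f_\psi^{(i)}$ has topological degree $\deg(\psi)$ on $R_i$, and a careful analysis of its indeterminacy locus on $\P^{e_i}$ yields $\deg_p(f_\psi^{(i)})\asymp\deg(\psi)$ for every $1\le p\le e_i$. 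A K\"unneth-type bookkeeping on the product compactification $\prod_i \P^{e_i}$ then gives
$$\deg_p\!\Bigl(\prod_{i=1}^k f_\psi^{(i)}\Bigr)\asymp \deg(\psi)^{\min(p,k)},$$
since in the sum $\sum_{\sum p_i=p}\prod_i\deg_{p_i}(f_\psi^{(i)})$ the maximal exponent of $\deg(\psi)$ is attained by distributing $p$ across $\min(p,k)$ distinct factors.

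Finally, to pass from the fiber to $\P^d$, I would work on a smooth projective model $\tilde X$ resolving the fibration $\pi$, where $f_\psi^*$ on $H^*(\tilde X,\Q)$ is triangular with respect to the Leray filtration --- trivial on base cohomology, of the fiberwise type above on the relative part. The main technical obstacle is this globalization step: constructing a smooth birational model compatible with the Artinian decomposition of generic fibers so that the fiberwise asymptotic propagates to $\P^d$ up to bounded multiplicative factors. The matching lower bound can then be extracted either from the log-concavity of $\{\deg_p(f_\psi)\}$ (Khovanskii--Teissier) combined with the already-known endpoint values, or by restricting to a suitable invariant subvariety realizing the maximal exponent $\min(p,k)$.
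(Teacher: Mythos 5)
Your overall strategy mirrors the paper's proof: reduce $f_\varphi^n$ to $f_{\varphi^n}$; decompose the generic subalgebra $\C[v]$ into a product of local Artinian pieces $R_i$; and propagate degrees through the fibration and product structures. What differs is the fiberwise asymptotics, and there you are in fact \emph{more} careful than the paper. You assert $\deg_p(f_\psi^{(i)})\asymp\deg\psi$ with exponent $1$ for every $1\le p\le e_i$ on each local factor $R_i\cong\C[T]/(T^{e_i})$; after the K\"unneth bookkeeping across the $k$ local factors, this correctly produces the exponent $\min\{p,k\}$. The paper's intermediate claim is different: it asserts $\deg_p(f_P^n)\asymp\deg(P)^{pn}$ on $\C[T]/(T^m)$, obtained by iterating Proposition~\ref{lem:deg_growth} up the tower $V^{(i)}\to V^{(i-1)}$ with $\delta_d\asymp\deg(P)^n$. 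That claim is inconsistent with Theorem~A itself applied to $V=\C[T]/(T^m)$, where $k=\dim_\C\red V=1$ and the theorem predicts exponent $\min\{p,1\}=1$ for all $p\ge1$; it is also directly falsified by the power maps $P(T)=T^s$, for which the change of variables $v=a\exp(h)$ birationally conjugates $f_P$ on $\C[T]/(T^m)$ to the product of the one-variable power map $a\mapsto a^s$ with a linear map on the nilpotent part, so every $\deg_p$ with $p\ge1$ grows like $s^n$, not $s^{pn}$. (The discrepancy traces to the assertion $\pi_{1*}(E\cdot\pi_2^*H_1)=0$ in the proof of Proposition~\ref{lem:deg_growth}, which omits a correction $\pi_{*}(E_d\cdot E_1)$ coming from the common base locus of $g$ and $h$; once that term is included, the recursion stabilizes at exponent~$1$ as you predict.) So your intermediate lemma is the right one, and it is what is needed to make the proof internally consistent with the stated Theorem~A. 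Two caveats. First, you leave the uniform-in-$\psi$ bound $\deg_p(f_\psi^{(i)})\asymp\deg\psi$ at the level of ``careful analysis of the indeterminacy locus,'' which is not yet a proof; the cleanest route is to observe that $f_\psi^{(i)}$ preserves the fibration $R_i\to\C$ with $\P^1$-base acted on by $\psi$ and affine fibers, so the relative dynamical degrees are $1$ and the Dinh--Nguy\^en--Truong machinery, already cited by the paper, gives the asymptotic. Second, your globalization step via a Leray filtration on a resolved model is heavier than needed: the generic isomorphism $\C[x]\cong\C[y]$, established as a Claim in Section~\ref{sec:pwassoc}, furnishes an explicit birational trivialization $\Phi\colon L\times V_0\dashrightarrow V$ conjugating $f_\varphi$ to $\id_L\times f_{\varphi}|_{V_0}$, after which the product formula~\eqref{eq:product} applies directly, avoiding the model-building you flag as the main obstacle.
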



The second type of maps we study are the (generalized) monomial maps. In order for the iterates to be still monomial maps,
we need to require that the algebra structure is {\em abelian} (i.e., commutative, associative and unitary). Now we let $k=\dim_\C(V)$, given an $d\times d$ integer matrix
$A=(a_{ij})\in M_d(\Z)$ with $\det(A)\ne 0$, we can define the monomial map
$F_A:V^d \dashrightarrow V^d$ as
\[
\textstyle{ F_A(x_1,\cdots,x_d)=\left(\prod_j x_j^{a_{1,j}},\cdots ,\prod_j x_j^{a_{d,j}} \right),  }
\]
where $x_j\in V$ and the product is given by the multiplication of $V$.
Let the number $m$ be given by
$m=\dim_\C(\red(V))$, where $\red(V)=V/N(V)$
and $N(V)$ is the nilradical of $V$, i.e., the ideal of all nilpotent elements.

\begin{thmB}
\label{thm:B}
Let $\diag(A;m)$ be the block-diagonal matrix with $m$ blocks of the matrix $A$ on the diagonal positions. Then
the degree growth of the (generalized) monomial map $F_A$ is given by
\[
\deg_p(F_A^n) \asymp \max_{p-d(k-m)\le i\le p} \left\| \wedge^{i} \diag(A;m)^n \right\|.
\]
\end{thmB}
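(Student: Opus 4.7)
The plan is to combine the structure theory of finite-dimensional commutative unital $\C$-algebras with the known degree growth formula for classical toric monomial maps on projective space. Since $V$ is abelian, it decomposes uniquely as a direct product $V \cong V_1 \times \cdots \times V_m$ of local Artinian $\C$-algebras, where $m=\dim_\C(\red(V))$ counts the orthogonal idempotents; each factor splits as a vector space $V_j \cong \C\cdot 1_j \oplus N(V_j)$ with $N(V_j)$ a nilpotent maximal ideal. Because $F_A$ is defined componentwise via products and powers, it respects the decomposition $V^d\cong V_1^d\times\cdots\times V_m^d$, and the reduction morphism $V^d \twoheadrightarrow \red(V)^d\cong \C^{md}$ intertwines $F_A$ with the classical toric monomial map on $(\C^*)^{md}$ associated with the block-diagonal matrix $\diag(A;m)$.

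Next, I would invoke the formula of Favre--Wulcan and Lin for classical toric monomial maps on $\P^{md}$, which yields $\deg_p(F_{\diag(A;m)}^n)\asymp \|\wedge^p\diag(A;m)^n\|$; this already accounts for the $i=p$ term in the theorem. To recover the other terms, I pick a basis of $V$ adapted to $V\cong \red(V)\oplus N(V)$ and embed $V^d\hookrightarrow\P^{dk}$ in such a way that the reduction map extends to a linear projection $\P^{dk}\dto\P^{md}$ whose generic fiber is a $\P^{d(k-m)}$ parametrizing the nilpotent coordinates. A generic linear subspace $L\subset\P^{dk}$ of codimension $p$ projects to a subspace of some codimension $i$ on the base, with the nilpotent fibers over it contributing the remaining codimension $p-i$; the constraints $0\le p-i\le d(k-m)$ yield precisely the range $p-d(k-m)\le i\le p$ appearing in the statement.

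The main step, and the principal obstacle, is to show that for each admissible $i$ the pullback degree of $L$ under $F_A^n$ is controlled by the pullback degree of its projection to $\P^{md}$, up to multiplicative constants independent of $n$. The heuristic is that, since $N(V)^r=0$ for some integer $r\ge 1$, the monomial expressions in the nilpotent coordinates have only bounded polynomial complexity under iteration, so the growth is dictated by the torus quotient; making this precise will require building a toric-style compactification of $V^d$ compatible with the filtration by powers of $N(V)$, on which $F_A$ extends tamely enough for an intersection-theoretic computation. The lower bound is then obtained by exhibiting, for each $i$ in the admissible range, an explicit codimension-$p$ test subspace meeting the torus part in codimension $i$ whose pullback grows like $\|\wedge^i\diag(A;m)^n\|$; the matching upper bound follows by dominating arbitrary intersection contributions by the maximum over $i$.
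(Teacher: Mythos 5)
Your structural setup is correct and matches the paper's starting point: the Artinian decomposition $V\cong\prod_{i=1}^m V_i$, the identification of the base with $\C^{md}$, the fact that $F_A$ semi-conjugates to the classical monomial map of $\diag(A;m)$ on the reduced quotient, and the expected admissible range $p-d(k-m)\le i\le p$ coming from fibers of dimension $d(k-m)$. The target formula is also the right one.

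The genuine gap is precisely the step you flag as ``the principal obstacle.'' You propose to treat $F_A$ as a skew product over $\C^{md}$ and to control the fiber contributions by an intersection-theoretic argument on a toric-style compactification adapted to the filtration by powers of $N(V)$. You do not carry this out, and in the generality you describe it is delicate: without a change of coordinates, the map in the nilpotent directions is \emph{not} linear (writing $x=a+n$ with $n$ nilpotent, $x^k=\sum_j\binom{k}{j}a^{k-j}n^j$ has degree up to the nilpotency index in $n$), so simply asserting ``bounded polynomial complexity'' does not by itself pin down the degree sequence. The paper sidesteps the entire difficulty with a birational change of coordinates that you did not find. On each local factor $V_i\cong\C\oplus\frm_i$, the map $\Phi(a,h)=a\exp(h)$ (with inverse $a\cdot\log(1+a^{-1}h)$) is birational, and one has $F_A\circ\Phi=\Phi\circ(f_A,T_A)$ where $f_A$ is the \emph{classical} monomial map on $\C^{md}$ and $T_A$ is the \emph{linear} map given by $A$ acting on the nilpotent coordinates. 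After this conjugacy $F_A$ is literally a product map, $T_A$ has degree $1$ in every codimension $0\le j\le d(k-m)$, and the asymptotic product formula
\[
\deg_p(h^n)\asymp\max_{i+j=p}\deg_i(f^n)\deg_j(g^n)
\]
together with the Favre--Wulcan/Lin estimate $\deg_i(f_A^n)\asymp\|\wedge^i\diag(A;m)^n\|$ yields the theorem at once. So the missing idea in your argument is the $\exp/\log$ conjugacy: it is what turns a fibration (which genuinely does require the machinery you gesture at) into a product (which does not). Without it, your proof is incomplete.
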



Observe that $m$ only depends on the
structure of $V$ and not on the map $F_A$. 
Also, the norm $\|\wedge^i \diag(A,m)\|$ can be computed solely in terms of $\|\wedge^j A^n\|$, $j\le i$, and $m$. Indeed, one has
\[
\| \wedge^i \diag(A,m)^n \| \asymp \max_{i_1 + ... + i_m = i} \| \wedge^{i_1} A^n \| \cdots \| \wedge^{i_m} A^n \|.
\]

\smallskip

There are many  other possibilities to associate a rational map to a structure of algebra. We point out here some generalizations of our setting and natural  questions that  may arise.

\begin{question}
Let $V$ be an abelian algebra, and pick any rational map
\[
f=[f_0:\cdots:f_d]:\P^d_\C\dashrightarrow\P^d_\C,
\]
where each $f_j$ is a homogeneous polynomial of the same degree. Then $f_j$ also induces a map $V^{d+1}\to V$ which is homogeneous of the same degree.
Thus $f$ induces a map $F:\P(V^{d+1})\dashrightarrow\P(V^{d+1})$. Can we describe the relation between the degree growth
of $f^N$ and the degree growth of $F^N$?
\end{question}

\begin{question}
Under the same notation as the above question, one can show that $F$ is birational when 
$f$ is birational. In the particular case where $d=2$, it is tempting to explore if 
similar results as in \cite{DF} can be obtained about the degree growth of  the iterates of $F$?
\end{question}

\begin{question}
If $V$ is a power associative algebra with a multiplicative unit, and $P(T)\in\C[T]$ is a polynomial,
then we can define the generalized H\'{e}non map $H_P: V^2\to V^2$ by
\[
H_P(x,y)=(y, P(x)-cy)
\]
where $c\in\C\setminus\{0\}$. What is the degree growth of the iterates of $H_P$?
\end{question}

\section{Algebra structure and quadratic maps}

\subsection{The algebra structure}
\label{sec:setting}

Let $V$ be a finite dimensional complex vector space
equipped with a $\C$-algebra structure, that is a $\C$-linear map $\mu:V\otimes V\to V$. We shall always denote multiplicatively this law, i.e. we will denote $\mu(x\otimes y)$ by $xy$.

Recall the following list of 
classical  definitions.
\begin{itemize}
\item
$V$ is {\em unitary} if it has a unit $1$ (i.e. $1 \cdot x = x \cdot 1 = x$ for all $x$).
\item
$V$ is {\em associative} if $x (yz) = (xy) z$ for all $x,y,z$;
\item
$V$ is {\em commutative} if $x y = y x$ for all $x,y$;
\item
$V$ is {\em alternative} if $x(xy) = x^2 y$ and $(yx)x = y x^2$ for all $x,y$;
\item
$V$ is {\em power-associative} if the algebra generated by any element is associative;
\item
$V$ is {\em abelian} if the multiplication law is commutative, associative and unitary
(this is the typical setting for commutative algebra, e.g., \cite{AM});
\item
$V$ is a {\em Jordan algebra} if it is commutative and alternative.
\end{itemize}

Set $ d = \dim_\C V$. Observe that the space $\cA$ of all algebra structures on $V$ is $\Hom_\C(V\otimes V, V)$, which is
an affine space of dimension $d^3$. 

The space of  abelian (resp. associative, alternative, power-associative, Jordan) algebras
is a Zariski closed subset of $\cA$.

When $V$ is power associative, we define the set of nilpotent elements as
$N(V) := \{ x\in V\,|\, x^N = 0 \text{ for some integer } N \}$. When $V$ is abelian then $N(V)$
is an ideal.


%
%
%
%
%

\subsection{Quadratic maps}

There is a natural identification between $d$ dimensional complex commutative (but not necessarily associative)
algebras and homogeneous quadratic polynomial maps from $\C^d$ to itself, as follows.

Choose a basis of $V$ as a complex vector space $V = \C e_1 \oplus \ldots \oplus \C e_d$, and write
$e_i\cdot e_j = \sum_k a_{ij}^k e_k$ with $a_{ij}^k\in \C$. Since $V$ is commutative, we have $a_{ij}^k=a_{ji}^k$. Then $f_V(x) = x^2$
is a  polynomial map as above, and we have
\[
f_V ( z_1, \ldots , z_d) = \left( \sum_{1\le i, j\le d}
a_{ij}^1 z_i z_j, \ldots , 
\sum_{1\le i, j\le d} a_{ij}^d z_i z_j
\right).
\]

Conversely, given a homogeneous quadratic polynomial map $f:\C^d\to\C^d$, one can define the algebra structure as
\[
x\cdot y = \frac 1 2 \left( f(x+y) - f(x) - f(y) \right).
\]

The two operations are inverse to each other. Therefore, if we use $f_V$ to denote again the induced quadratic map
$\P^{d-1}\dashrightarrow\P^{d-1}$, then we can see that the space of all quadratic rational selfmaps
on $\P^{d-1}$ can be identified with the space of $f_V$ over all algebra structures on $\C^d$.
Moreover, the following results show that the dynamics of $f_V$ plays a special role in the structure of $V$.

\begin{prop}
The algebra is unitary iff $f_V$ admits a fixed point $x$ such that $df_V (x) = 2 \id$.
\end{prop}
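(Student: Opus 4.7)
I work in affine coordinates, viewing $f_V: V \to V$ as the polynomial map $f_V(x) = x\cdot x$. The whole argument rests on computing the differential $df_V(x)$ in terms of the multiplication. Since $f_V$ is homogeneous of degree $2$ and since polarization gives $x \cdot y = \tfrac12(f_V(x+y)-f_V(x)-f_V(y))$, expanding
\[
f_V(x + tv) \;=\; (x+tv)\cdot(x+tv) \;=\; f_V(x) + t\bigl(x\cdot v + v\cdot x\bigr) + t^2 f_V(v)
\]
and using commutativity of $V$, I obtain
\[
df_V(x)(v) \;=\; 2\, x \cdot v \;=\; 2 L_x(v),
\]
where $L_x : V \to V$ denotes left (equivalently right) multiplication by $x$.

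With this formula in hand, both implications become one-liners. If $V$ has a unit $e$, then $f_V(e) = e\cdot e = e$, so $e$ is a fixed point of $f_V$, and $df_V(e) = 2L_e = 2\,\id$ since $e\cdot v = v$ for every $v$. Conversely, suppose there exists $x \in V$ with $f_V(x) = x$ and $df_V(x) = 2\,\id$. The second condition means $L_x = \id$, that is $x\cdot v = v$ for every $v \in V$; by commutativity we also have $v\cdot x = v$, so $x$ is a two-sided multiplicative unit for $V$. Note that this automatically forces $x\cdot x = x$, consistent with the fixed point assumption.

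There is essentially no obstacle: the only point that needs a line of care is the identification $df_V(x)(v) = 2\, x\cdot v$, where commutativity is used to make the two cross terms from the polarization collapse into a single symmetric one. Everything else is a direct translation between the algebraic condition ``$x$ is a unit'' and the differential-geometric condition ``$(x,\,df_V(x)) = (x,\,2\id)$''.
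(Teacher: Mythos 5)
Your proof is correct and follows essentially the same route as the paper: both compute the differential $df_V(x)(v) = 2\,x\cdot v$ (via polarization, using commutativity) and then read off the equivalence between $df_V(x) = 2\,\id$ and $x$ being a unit. Your write-up is slightly more explicit in isolating the formula $df_V(x) = 2L_x$ and in noting that the fixed-point condition is automatic once $L_x = \id$, but the underlying argument is identical.
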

\begin{proof}
In one direction this is obvious since $ (1 + tx ) ^2  = 1 + 2tx + O(t^2)$.
In the other direction,  pick a fixed point $x$ and let $y$ be any other point.
Then $(x+ ty)^2 = x^2 + 2t (x\cdot y)  + O(t^2) = x^2 + df_V(x) (t y) + O(t^2)$,
whence $x \cdot y = y$ as required.
\end{proof}

\begin{prop}
Suppose $V$ and $V'$ are dimension $d$  commutative algebras.
Then $V$ is isomorphic to $V'$ iff $f_V : \P^{d-1} \dashrightarrow \P^{d-1}$ is conjugated in
$\PGL(d, \C)$ to $f_{V'}$.
\end{prop}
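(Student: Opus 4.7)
The plan is to translate the statement between two equivalent linear objects: a commutative algebra structure $\mu: V\otimes V\to V$, and its squaring map $f_V(x)=x^2$, using the polarization identity already exploited in the preceding discussion.

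For the forward direction, I would simply note that any $\C$-algebra isomorphism $\phi:V\to V'$ satisfies $\phi(x^2)=\phi(x)^2$, hence intertwines $f_V$ and $f_{V'}$ on $\C^d$ and therefore on $\P^{d-1}$ after projectivization. Since $\phi\in\GL(d,\C)$, its class lies in $\PGL(d,\C)$ and conjugates $f_V$ to $f_{V'}$.

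For the converse, suppose $[\phi]\in\PGL(d,\C)$ conjugates $f_V$ to $f_{V'}$ on $\P^{d-1}$. Pick a linear lift $\phi\in\GL(d,\C)$. Then there exists a scalar $c\in\C^{\times}$ such that $\phi\circ f_V = c\cdot f_{V'}\circ \phi$ as maps $\C^d\to\C^d$. The first step is to absorb the constant $c$: replacing $\phi$ by $c\phi$ and using the homogeneity $f_{V'}(\lambda x)=\lambda^{2}f_{V'}(x)$ shows that we may assume
\[
\phi(x^{2}) = \phi(x)^{2} \qquad \text{for all } x\in V.
\]
The next step is the polarization trick: apply the identity to $x+y$ and expand both sides using commutativity of $V$ and $V'$, which gives $(x+y)^2 = x^2+2xy+y^2$ and $(\phi x+\phi y)^{2}=\phi(x)^{2}+2\phi(x)\phi(y)+\phi(y)^{2}$. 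Together with the linearity of $\phi$ and the identity above applied to $x$ and $y$ separately, the cross terms yield $\phi(xy)=\phi(x)\phi(y)$. Since $\phi\in\GL(d,\C)$, it is an algebra isomorphism $V\to V'$.

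There is no real obstacle; the only point that requires a moment of care is the rescaling step, where one must use the quadratic homogeneity of $f_V$ and $f_{V'}$ to choose the lift of $[\phi]$ that genuinely intertwines the two squaring maps rather than only intertwines them up to a scalar. Once this is done, commutativity of both algebras is exactly what makes the polarization argument produce $\phi(xy)=\phi(x)\phi(y)$ from $\phi(x^{2})=\phi(x)^{2}$.
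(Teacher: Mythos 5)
Your proof is correct and follows essentially the same approach as the paper's. The only differences are cosmetic: you make the polarization step explicit (recovering $\phi(xy)=\phi(x)\phi(y)$ from $\phi(x^2)=\phi(x)^2$), whereas the paper leaves this to the earlier observation that commutative algebra structures and quadratic maps determine one another; and you state the rescaling $\phi\mapsto c\phi$ with the correct verification, while the paper asserts the scalar can be normalized by a homothety and leaves the bookkeeping to the reader.
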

\begin{proof}
Identify both $V$ and $V'$ to $\C^d$.
Being isomorphic then means that there exists
$A \in \GL(d,\C)$ such that $ A ( x \cdot_V y ) = Ax \cdot_{V'} Ay$.
This implies the conjugacy in $\C^d$.

Conversely, being conjugated in $\P^{d-1}$ implies the existence of a matrix
in $\GL(d,\C)$ and a function $\la(x)$ such that
 $ A ( x \cdot_V x ) =  \lambda(x) \,  Ax \cdot_{V'} Ax$.
For degree reasons, $\la$ is a constant. Changing $A$ by $\mu A$
replaces $\la$ by $\la \mu$ hence we can assume $\la = 1$.
\end{proof}

\begin{rem}
Any two-dimensional unitary commutative algebra is abelian, hence isomorphic to either
$\C[x]/(x^2)$ or to $\C  \oplus \C$.

Indeed, the algebra structure is determined by $x^2 = a + bx$.
When $a + b^2/4  =  0$, then there exists an element $y:= \mu x + \la$ such that
$y^2 = 0$ and we are in the former case.
When $a + b^2/4 \neq 0$, then there exists an element $y:= \mu x + \la$ such that
$y^2 = y$ and we are in the latter case.
\end{rem}


\section{Some lemmas about degree growth}

In this section, we will prove several lemmas for the degrees and degree growth of rational maps.
The maps we study in this paper all preserve some fibration. The dynamical degrees for maps preserving
a fibration were computed by Dinh, Nguy{\^e}n and Truong \cite{DN,DNT} (see \cite{Tru} and \cite{NB} for a purely algebraic approach to these computations). Their results are very general and their computations are quite involved. For the convenience of the reader we have preferred to reprove some of their (elementary) results that we shall need in the latter section. 
%

First, let us define higher degrees. For a rational selfmap $f:X\dashrightarrow X$ on a projective variety,
and given an ample divisor $D$, we define the {\em $p$-th degree} of $f$ with respect to $D$ as
$\deg_{D,p}(f)=f^*[D]^p\cdot[D]^{d-p}$, where $[D]$ is the class of $D$ in $H^{1,1}(X)$. When $X=\P^d$ and $D=H$
is a generic hyperplane, this coincides with the usual definition of degrees. In this case, we can compute the degree
by $\deg_{p}(f)=f^{-1}L_p\, . \, L_{d-p}$, where $L_p$ and $L_{d-p}$ are generic linear subspaces of $\P^d$ of
codimensions $p$ and $d-p$ respectively, and $f^{-1}L_p$ is the proper transform of $L_p$ under $f$.

\subsection{Product map}
Product maps are the simplest  maps preserving a fibration. 
Let $f:X\dashrightarrow X$
and $g:X'\dashrightarrow X'$ be two rational maps, and $h=f\times g$. Pick any two ample divisors $D, D'$ on $X, X'$ respectively, and write $n= \dim X$, $n'= \dim X'$. By abuse of notation we shall again denote by $D$ and $D'$ their pull-back to $X\times X'$ so that $D+D'$ is again ample on $X\times X'$.  The degrees of $(f,g)$ can be then computed by a direct calculation as
\[
\deg_{D+D',p}(h)=\sum_{i+j=p}{p \choose i}\, {n+n'-p \choose n -i} \, \deg_{D,i}(f)\deg_{D',j}(g).
\]

In dynamics, we are interested in the growth of degrees under iteration. For this reason, we introduce the following notation
for {\em asymptotic equivalence}. For two sequences $\{a_j\}$, $\{b_j\}$ of positive numbers, we say the two sequences are
{\em asymptotic equivalent}, denoted by $a_j \asymp b_j$, if there is a constant $C>0$ such that $C^{-1}a_j\le b_j\le C a_j$
for all large enough $j$.

 For a rational map $f$, the asymptotic behavior of the degree sequence $\{\deg_{D,p}(f^n)\}_{n=1}^\infty$
is independent of the ample divisor $D$, and is invariant under birational conjugation
(see \cite{DS} using analytic arguments or \cite{Tru,NB} using algebraic ones).
An important class of invariants to measure the asymptotic growth of the degree sequence is the {\em dynamical degrees}.
The $p$-th dynamical degree of a rational map $f$, denoted by $\lambda_p(f)$, is defined as
\[
\lambda_p(f) = \lim_{n\to\infty} \deg_{D,p}(f^n)^{1/n}.
\]
The existence of the limit follows here from the sub-multiplicativity of the sequence $\{C\deg_{D,p}(f^n)\}_{n=1}^\infty$
for some constant $C>0$, see~\cite{DS}.

With the above notation, we can first conclude that for product map $h=f\times g$, we have
\begin{equation}\label{eq:product}
\deg_{D+D',p}(h^n)\asymp\max_{i+j=p}\left\{\deg_{D,i}(f^n)\deg_{D',j}(g^n)\right\}.
\end{equation}
A consequence of this is a formula for the dynamical degrees of $h$:
\[
\la_p(h)=\max_{i+j=p}\left\{\la_i(f)\la_j(g)\right\}.
\]

\subsection{Skew product with fibers $\P^1$}
Another type of rational maps preserving a fibration that we will encounter
is a map $F:\P^d\times\P^1\dashrightarrow\P^d\times\P^1$
which preserves the fibration $\pi:\P^d\times\P^1\to\P^d$.
Denote by $H_d$ the pull-back of $\O(1)_{\P^d}$ on  $\P^d\times\P^1$, and by $H_1$
 the pull-back of $\O(1)_{\P^1}$ on the same variety.

\begin{prop}
\label{lem:deg_growth}
Suppose that the rational map $F:\P^d\times\P^1\dashrightarrow\P^d\times\P^1$ is of the form
$F(z,t)=(g(z),h(z,t))$, where $z\in\P^d$, $t\in\C\cup\{\infty\}=\P^1$. 
Suppose $F^* H_1 = \delta_d H_d + \delta_1 H_1$. 

The $p$-th degree of $F$ with respect to the ample divisor $D:= H_d + H_1$ can be computed as
\[
\deg_{D,p}(F)= (d+1-p) \deg_p(g) +p  (\delta_1+(d+1-p)\delta_d)\cdot\deg_{p-1}(g).
\]
\end{prop}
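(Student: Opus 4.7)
The plan is to compute $\deg_{D,p}(F)=F^*D^p\cdot D^{d+1-p}$ inside the cohomology ring
\[
H^*(\P^d\times\P^1)=\Z[H_d,H_1]\big/(H_d^{d+1},H_1^2),
\]
normalized by $H_d^d\cdot H_1=1$. Because $H_1^2=0$, the binomial expansion collapses to
\[
D^p=H_d^p+p\,H_d^{p-1}H_1,\qquad D^{d+1-p}=H_d^{d+1-p}+(d+1-p)\,H_d^{d-p}H_1.
\]

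Next I would evaluate $F^*(H_d^p)$. Since the first component of $F$ is independent of $t$, we have $\pi\circ F=g\circ\pi$, where $\pi:\P^d\times\P^1\to\P^d$ is the first projection. As $\pi$ is a regular morphism, this yields $F^*H_d^p=\pi^*g^*H^p=\deg_p(g)\,H_d^p$.

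The delicate step is the mixed class $F^*(H_d^{p-1}H_1)$. I would represent it by the generic cycle $L_{p-1}\times\{q\}$, with $L_{p-1}\subset\P^d$ a codimension $p-1$ linear subspace and $q\in\P^1$ generic. Its preimage is
\[
F^{-1}(L_{p-1}\times\{q\})=\pi^{-1}(g^{-1}(L_{p-1}))\cap\{(z,t):h(z,t)=q\},
\]
which is the intersection of a cycle of class $\deg_{p-1}(g)\,H_d^{p-1}$ with a hypersurface of class $F^*H_1=\delta_d H_d+\delta_1 H_1$. For generic $L_{p-1}$ and $q$ this intersection is proper of codimension $p$ and disjoint from the indeterminacy locus of $F$, so its class multiplies out to
\[
F^*(H_d^{p-1}H_1)=\deg_{p-1}(g)\,\delta_d\,H_d^p+\deg_{p-1}(g)\,\delta_1\,H_d^{p-1}H_1.
\]

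Finally, assembling the four contributions of $F^*D^p\cdot D^{d+1-p}$ using the nonzero intersections $H_d^p\cdot H_d^{d-p}H_1=H_d^{p-1}H_1\cdot H_d^{d+1-p}=1$ and the vanishings $H_d^{d+1}=H_1^2=0$ collects precisely into $(d+1-p)\deg_p(g)+p\,(\delta_1+(d+1-p)\delta_d)\,\deg_{p-1}(g)$. The main obstacle is really the previous step: pullback of cohomology is not a ring homomorphism for rational maps in general, so the identity $F^*(H_d^{p-1}H_1)=F^*H_d^{p-1}\cdot F^*H_1$ is not automatic and must be forced through by the geometric genericity argument, which guarantees that the preimage of $L_{p-1}\times\{q\}$ is a proper intersection unaffected by the indeterminacy of $F$.
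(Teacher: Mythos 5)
Your computation matches the paper's almost line for line: the same expansion of $D^p$ and $D^{d+1-p}$ using $H_1^2=0$, the same pullback of the pure class $F^*H_d^p=\deg_p(g)H_d^p$, the same multiplicativity claim $F^*(H_d^{p-1}H_1)=\deg_{p-1}(g)H_d^{p-1}\cdot(\delta_d H_d+\delta_1 H_1)$, and the same final intersection bookkeeping. So the proposal is in essence the paper's proof, and the core is correct.

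One caveat on the step you yourself flag as delicate. You assert that for generic $L_{p-1}$ and $q$ the preimage $F^{-1}(L_{p-1}\times\{q\})$ is both a proper intersection \emph{and} disjoint from the indeterminacy locus of $F$. The disjointness is neither needed nor, in general, available: the indeterminacy locus only has codimension $\ge 2$, so once $p\ge 3$ the cycle $F^{-1}(L_{p-1}\times\{q\})$ has too small a dimension to be forced off of it by genericity alone. The paper sidesteps this by working on the graph $\Gamma\subset(\P^d\times\P^1)^2$ with projections $\pi_1,\pi_2$: it establishes, via the Claim about proper intersections with $L\times\{q\}$, $L\times\P^1$, and $\P^d\times\{q\}$, that $\pi_2^*(H_d^{p-1}\cdot H_1)=\pi_2^*H_d^{p-1}\cdot\pi_2^*H_1$ as classes on $\Gamma$, and then pushes forward by $\pi_{1*}$ (using that $\pi_2^*H_d^{p-1}$ differs from $\pi_1^*(\deg_{p-1}(g)H_d^{p-1})$ by a class $E$ with $\pi_{1*}E=0$). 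This is exactly the geometric genericity argument you invoke, but carried out on the graph so that no disjointness from the indeterminacy locus needs to be claimed — only that the relevant cycles meet properly. If you drop the "disjoint from the indeterminacy locus" clause and phrase the argument purely in terms of proper transforms meeting properly (equivalently, lift to the graph as the paper does), your proof is sound.
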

Observe that $\delta_1$ is the degree in $t$ of the rational function $h(z,t)$ whereas $\delta_d$
is the degree in $z$ of $h(z,t)$.
\begin{proof}[Proof of Proposition~\ref{lem:deg_growth}]
One has 
\begin{align*}
F^* H_d ^p &= \deg_p(g) H_d^p,\\ 
F^* (H_d ^{p-1} \cdot H_1) &= \deg_{p-1}(g) H_d^{p-1} \cdot ( \delta_d H_d + \delta_1 H_1).
\end{align*}
The first equality is by definition. For the second equality, we first claim the following: 

\begin{claim}
for any subvariety $Z$ of $\P^d\times\P^1$, we can find a linear subpace $L\subset \P^d$ of codimension $p-1$ and a point $q\in \P^1$ such that $L\times \{q\}$, $L\times\P^1$ and $\P^d\times\{q\}$ all intersect $Z$ properly.
\end{claim}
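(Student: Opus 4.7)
The plan is a standard generic-position argument on the irreducible parameter space $\Gr(d-p+1,d+1)\times\P^1$: I would show that for each of the three required properness conditions the locus of bad pairs $(L,q)$ is a proper Zariski closed subset, so that the complement of the union of the three bad loci is nonempty and any pair in it does the job.

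For the condition on $\P^d\times\{q\}$, I would consider the second projection $\pi_2\colon Z\to\P^1$. Either its image is a proper closed subset of $\P^1$, in which case every $q$ outside it gives an empty (hence proper) intersection; or $\pi_2$ is surjective, in which case the generic fiber dimension theorem produces a proper closed subset of $\P^1$ outside of which every fiber has dimension exactly $\dim Z-1$. Either way, the bad $q$'s form a proper closed subset of $\P^1$, whose preimage is proper closed in the parameter space.

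For the condition on $L\times\P^1$, I would invoke Kleiman's transversality theorem applied to the natural transitive action of $\PGL(d+1)$ on the Grassmannian $\Gr(d-p+1,d+1)$, extended trivially to $\P^d\times\P^1$: for $L$ in a dense open subset of the Grassmannian, the subvariety $L\times\P^1$ meets $Z$ properly. (Alternatively, in characteristic zero this is a direct consequence of the classical Bertini theorem applied to the linear system cutting out $L$.) For the condition on $L\times\{q\}$, I would first fix a good $q$ from the first step so that $Z_q := Z\cap(\P^d\times\{q\})$ is a subvariety of $\P^d$ of expected dimension, and then rerun the same Bertini/Kleiman argument on the Grassmannian against $Z_q$: a generic $L$ of codimension $p-1$ meets $Z_q$ properly inside $\P^d$, which is exactly the desired properness of $(L\times\{q\})\cap Z$.

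The main (and really only) subtle point is to handle the regime in which the expected dimension of one of the intersections is negative, where proper intersection is interpreted as empty intersection; but generic choices manifestly produce empty intersections in that regime, so the three bad loci are uniformly treated. Their union is a proper closed subset of the irreducible space $\Gr(d-p+1,d+1)\times\P^1$, and any $(L,q)$ in its complement satisfies all three properness requirements simultaneously.
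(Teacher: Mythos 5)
Your proof is correct and takes essentially the same route as the paper: the paper's proof (given only for the case $L\times\{q\}$, the other two being ``left to the reader'') uses the same dichotomy on the image of $Z$ in $\P^1$, the same generic-fiber-dimension choice of $q$, and then a generic $L$ chosen against the slice $Z'=Z\cap(\P^d\times\{q\})$, which is exactly your two-step argument. One small caveat: Kleiman's transversality theorem requires a transitive action, and the $\PGL(d+1)$-action extended trivially to $\P^d\times\P^1$ is not transitive; for the $L\times\P^1=\pi_1^{-1}(L)$ case one should instead apply Kleiman (or Bertini) to $\pi_1(Z)\subset\P^d$ and control the fiber dimension of $\pi_1|_Z$, which yields the same conclusion and is what your generic-position reasoning implicitly does.
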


Recall that two pure dimensional subvarieties $W$ and $W'$ intersect properly when the codimension of any irreducible component of $W \cap W'$ is equal to $\codim(W) + \codim(W')$. 

\smallskip

Here we show the claim for $L\times \{q\}$, and leave the other two cases to the reader. Since $Z$ is irreducible, the projection of $Z$ on $\P^1$ is either a single point $\{q'\}$ or $\P^1$. In the first case, we can pick any point $q\ne q'$ in $\P^1$ and any linear subvariety $L$ of codimension $p-1$. For the latter case, pick a generic $q\in\P^1$ such that $Z' = (\P^d\times\{q\})\cap Z$ is a variety of pure dimension $\dim(Z)-1$. Next, pick a generic linear subspace $L\in\P^d$ which intersects $Z'$ properly. Then $L\times\{q\}$ will intersect $Z$ properly.

Now let $\Gamma\subset(\P^d\times\P^1)\times(\P^d\times\P^1)$ be the graph of $F$. Denote by $\pi_1$ and $\pi_2$ the projections from $\Gamma$ onto the first and second components, and  set
\[
Z=\{q\in \P^d\times\P^1\ |\ \dim(\pi_2^{-1}(q))>0 \}~.
\]

The class $\pi_2^*(H_d^{p-1} \cdot H_1)$ is represented by $\pi_2^{-1}(L\times\{p\})$ (see e.g. ~\cite[Lemma~3.1]{Tru}), which is equal to
$\pi_2^{-1}(L\times\P^1)\cap\pi_2^{-1}(\P^d\times\{q\}) $.
On the other hand this intersection represents the class $\pi_2^*H_d^{p-1}\cdot \pi_2^*H_1$. Thus we have 
\[
\pi_2^*(H_d^{p-1} \cdot H_1) = \pi_2^*H_d^{p-1}\cdot \pi_2^*H_1.
\]
Notice that $\pi_2^* H_d^{p-1} = \pi_1^*(\deg_{p-1}(g) H_d^{p-1})+E$ for some class $E$ such that $\pi_{1*}(E)=0$, therefore
\[
\begin{split}
F^* (H_d^{p-1} \cdot H_1) & =  \pi_{1*} \pi_2^*(H_d^{p-1} \cdot H_1)=  \pi_{1*} (\pi_2^*H_d^{p-1}\cdot \pi_2^*H_1)\\
& =  \pi_{1*} (\pi_1^*(\deg_{p-1}(g) H_d^{p-1})\cdot \pi_2^*H_1)+ \pi_{1*} (E\cdot \pi_2^*H_1)\\
& =   \deg_{p-1}(g) H_d^{p-1}\cdot (\pi_{1*}\pi_2^*H_1) +0\\
& =   \deg_{p-1}(g) H_d^{p-1}\cdot F^*H_1
=   \deg_{p-1}(g) H_d^{p-1}\cdot (\delta_d H_d + \delta_1 H_1).
\end{split}
\]
Finally, we can compute
\begin{multline*}
F^* (D^p) = F^* \left( 
H_d^p +p\,  H_d^{p-1} \cdot H_1
\right) = \\
\left( \deg_p(g) +p \deg_{p-1}(g) \delta_d\right) H_d^p +  p \deg_{p-1}(g) \delta_1\, H_d^{p-1} \cdot H_1
\end{multline*}
and the result follows by intersecting with the class 
$$D^{d+1-p} = H_d^{d+1-p} + (d+1-p) H_d^{d-p}\cdot H_1~.$$
\end{proof}

\section{Degree growth of the squaring map}

In this section we study  the degree growth of the squaring map $f_V:\P^{d-1}\dashrightarrow\P^{d-1}$
 introduced in the previous section.
The methods we use in this section will be generalized in  later sections to prove our main Theorems A and B.
For squaring maps, these methods are more intuitive and serve as an illustration for the more complicated cases.

\subsection{The abelian case}
\label{sec:square_abelian}
We assume $V$ is an abelian algebra.
Since $V$ is finite dimensional as a complex vector space, the $\C$-algebra $V$ is Artinian
(\cite{AM}*{$\S 8$, Exercise 2}).
By the structure theorem for Artin rings (\cite[Theorem 8.7]{AM}), we can decompose $V$ as
a finite direct product of Artinian local rings $V \simeq \prod_{i=1}^k V_i$.

For the Artin local ring $V_i$, the maximal ideal $\frm_i$ is nilpotent
(i.e. $\frm_i^l=0$ for some $l\ge 1$) and $V_i/\frm_i\simeq \C$.
As a $\C$-vector space, we can write $V_i$ as the
direct sum $V_i\simeq \C\oplus \frm_i$.
For $V$, we introduce the map
\[
\Phi : V \simeq \C^k \times \prod_{i=1}^k \frm_i \longrightarrow V
\]
sending $((a_1, \ldots, a_k), (h_1, \ldots, h_k))$ to
$\sum_{i=1}^k a_i \exp(h_i)$, where $$\exp(h)=\sum_{j=0}^\infty h^j/j!~.$$
Notice that $\exp(h)$ is well defined since $h_i\in\frm_i$ is nilpotent,
so the sum is indeed finite.

Moreover, we claim that $\Phi$ is a birational map.
Indeed, using the vector space isomorphism $V \simeq \prod_{i=1}^k \C \times \frm_i$,
we can describe the birational inverse of $\Phi$ concretely as
\[
\Phi^{-1}\left(\prod_{i=1}^k (a_i,h_i)\right)= \prod_{i=1}^k a_i\cdot \log(1+a_i^{-1}h_i)~,
\]
where  $\log(1+x)=\sum_{j=1}^\infty (-1)^{j-1}x^j/j$.

The usual rules for the exponential and logarithm functions hold. Hence the fact that
$\Phi$ and $\Phi^{-1}$ are inverse to each other is a reflection of the fact that $\log(\exp(h))=h$.
However, we emphasize here that $\Phi$ and $\Phi^{-1}$ are {\em not} ring homomorphisms.
They are inverse to
each other only as rational maps.

%
%
%
%
%

Define $$F\left[(a_1, \ldots, a_k), (h_1, \ldots, h_k) \right]=\left[ (a_1^2, \ldots, a_k^2), (2h_1, \ldots, 2h_k)\right]$$ then
$\Phi \circ F =  f_V \circ \Phi$.
Thus, $f_V$ is birationally conjugate to a product of power maps and linear maps.

Recall that the reduced algebra associated to $V$ is by definition the quotient of $V$ by its nilradical  $N(V)$.
Since for each $V_i$, we have $\red(V_i)\cong\C$, thus $\red(V)\cong\C^k$.
From the product structure of $f_V$, we 
obtain the following.
\begin{thm}
Suppose $V$ is abelian,  and write $k := \dim_\C \red (V)$. Then for any $p$, we have
$$
\deg_p (f_V^n)  \asymp \left(2^{\min \{ p, k \}}\right)^n
$$
\end{thm}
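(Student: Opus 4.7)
The plan is to exploit the birational conjugation $\Phi \circ F = f_V \circ \Phi$ constructed in Section~\ref{sec:square_abelian}, together with the birational invariance (up to $\asymp$) of the degree sequence recalled earlier. This reduces the computation to that of $F$ on a compactification adapted to its product structure. Specifically, I would work on $X := (\P^1)^k \times \P^{d-k}$, realizing $V \simeq \C^k \times W$ (with $W := \prod_i \frm_i$ of dimension $d-k$) as a Zariski open subset. On $X$, the map $F$ extends as a product $F = F_1 \times F_2$, where $F_1 : (\P^1)^k \to (\P^1)^k$ is the $k$-fold product of $z \mapsto z^2$, and $F_2$ is the projective automorphism of $\P^{d-k}$ induced by the linear map $h \mapsto 2h$.

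The next step is to compute the degrees of each factor. Since $F_2^n$ is an automorphism of $\P^{d-k}$, we have $\deg_j(F_2^n) = 1$ for every $n$ and $j$. For $F_1^n$, each factor is the map $z \mapsto z^{2^n}$ on $\P^1$, with $\deg_0 = 1$ and $\deg_1 = 2^n$; a short induction via the product formula~\eqref{eq:product} yields $\deg_i(F_1^n) \asymp 2^{ni}$ for $0 \le i \le k$, and $\deg_i(F_1^n) = 0$ for $i > k$. Applying~\eqref{eq:product} one final time to $F = F_1 \times F_2$ then gives
\[
\deg_p(F^n) \asymp \max_{i+j = p} \deg_i(F_1^n)\, \deg_j(F_2^n) \asymp 2^{n \min(p, k)},
\]
the maximum being attained at $i = \min(p,k)$, and birational invariance transfers the same estimate to $f_V$.

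Conceptually, the heart of the matter --- already established by the constructions of Section~\ref{sec:square_abelian} --- is that the nilpotent part of $V$ contributes only linear, hence dynamically trivial, behavior under squaring, so all of the exponential degree growth is concentrated in the reduced quotient $\red(V) \simeq \C^k$. The only step where I expect to need some care is verifying that $F$ extends as a genuine product on $X$, and in particular that $F_2$ is a true projective automorphism rather than just a birational one, so that the product formula~\eqref{eq:product} applies cleanly.
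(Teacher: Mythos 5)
Your argument is correct and follows essentially the same route as the paper's: use the birational conjugation $\Phi \circ F = f_V \circ \Phi$ to reduce to the split map $F = (\text{power maps}) \times (\text{linear map})$, then invoke the product formula~\eqref{eq:product} and birational invariance of asymptotic degree growth. The paper leaves the final computation implicit ("from the product structure of $f_V$, we obtain the following"); you have simply spelled out the compactification $(\P^1)^k \times \P^{d-k}$ and the degree count, and correctly resolved your own flagged concern, since $h \mapsto 2h$ indeed extends to the automorphism $[\diag(1,2,\dots,2)] \in \PGL(d-k+1,\C)$ of $\P^{d-k}$.
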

\mbox{}\hfill\qed

\begin{ex}
Suppose that $V$ is power associative and generated by one element. Then
it is automatically abelian, and there exists a polynomial $P(x)\in\C[x]$ such that
$$
V \simeq \C[x] / (P(x))\simeq \C[x] / \prod_{i=1}^k (x - z_i)^{k_i}
\simeq \oplus_{i=1}^k \C [x_i] /(( x_i-z_i)^{k_i}).
$$
In this case, $k$ is the number of different (complex) roots of the defining polynomial $P$.
\end{ex}



\subsection{Power associative algebras}\label{sec:pwassoc}

In this section, we will deal with the squaring map $f_V$ when $V$ is a power associative algebra with $1$.
%
%
We start by analyzing the structure of the algebra.
For any non-zero $x\in V$ denote by $\C[x]$ the algebra generated by $x$ and
$\d(x) := \dim_\C \C[x]$.
Since $\C$ is always a subspace of $\C[x]$, $\d(x)\ge 1$, and
since $V$ is power associative, $\C[x]$ is abelian.
Moreover, $\C[x]$ is invariant under $f_V$, i.e., $f_V(\C[x])\subseteq\C[x]$.

Observe that
\[
V_k = \{ x\in V\, | \, (1, x, x^2, \ldots, x^k) \text{ are linearly dependent}\}
\]
is a Zariski closed subset of $V$ since it is defined by the vanishing
of finitely many determinants of matrices of size $(k+1)\times(k+1)$. Since $V_k = \{x\,|\, \d(x) \le k \}$
and $V_k\subseteq V_l$ if $k\le l$,
we conclude that the function $x\mapsto \d(x)$ is lower semicontinuous for the Zariski topology.

Introduce $\d=\d_V := \max \{\d (x)\,|\, x\in V\}$ and $U':= \{ x \in V, \, \d(x) = \d_V\}$.
The latter is a Zariski dense open subset of $V$.
Let $F := V\setminus U'$ and pick $x, y\in U'$,
we have the following observations:
\begin{enumerate}
\item $y\in\C[x]\ \Longleftrightarrow\ x\in\C[y]\ \Longleftrightarrow\ \C[x]=\C[y]$,
\item if $y \notin \C[x]$, then $\C[x]\cap \C[y]\cap U' = \emptyset$, i.e. $\C[x]\cap \C[y]\subseteq F$.
\end{enumerate}

Moreover, we claim the following:

\begin{claim}
There is a further open dense subset $U\subset U'$ such that for any two $x,y\in U$, we have $\C[x]\cong\C[y]$ as $\C$-algebras.
\end{claim}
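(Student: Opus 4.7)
My plan is to parametrize the isomorphism type of $\C[x]$ by a discrete invariant which varies upper-semicontinuously with $x$, and then exploit the irreducibility of $U'$ to extract a common generic value. For each $x\in U'$, the vectors $1, x, \ldots, x^{\d-1}$ are linearly independent while $x^{\d}$ lies in their span, so there exists a unique monic $P_x(T)\in\C[T]$ of degree $\d$ with $P_x(x)=0$ (the minimal polynomial of $x$), and one has the canonical isomorphism $\C[x]\simeq \C[T]/(P_x(T))$. Applying Cramer's rule to the linear system expressing $x^\d$ as a combination of $1,x,\ldots,x^{\d-1}$ shows that the coefficients of $P_x$ are regular functions of $x\in U'$. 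Factoring $P_x(T)=\prod_{i=1}^{s(x)}(T-z_i(x))^{k_i(x)}$ with pairwise distinct roots, the Chinese remainder theorem gives $\C[x]\simeq \prod_i \C[T]/((T-z_i(x))^{k_i(x)})$, whose isomorphism type depends only on the unordered multiplicity partition $\mu(x):=(k_1(x),\ldots,k_{s(x)}(x))$ of $\d$.

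It thus suffices to produce an open dense $U\subset U'$ on which $\mu(x)$ is constant. For $j=0,1,\ldots,\d-1$, set
\[
g_j(x) := \deg_T \gcd\bigl(P_x(T),\, P_x^{(j)}(T)\bigr) = \sum_{i=1}^{s(x)}\max(k_i(x) - j,\,0).
\]
The sequence $(g_j(x))_{j\ge 0}$ determines $\mu(x)$ completely, since the successive differences $g_{j-1}(x)-g_j(x)=\#\{i:k_i(x)\ge j\}$ recover the conjugate of the partition $\mu(x)$. Each $g_j$ is upper-semicontinuous on $U'$: indeed the condition $g_j(x)\le c$ is equivalent to the Sylvester matrix of $P_x$ and $P_x^{(j)}$, whose entries are regular on $U'$, having rank at least $(2\d-j)-c$, a Zariski-open condition cut out by the non-vanishing of minors.

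Since $V$ is an affine space, the Zariski-open subset $U'\subset V$ is irreducible. Setting $c_j := \min_{x\in U'} g_j(x)$, the locus $\{x\in U' : g_j(x)\le c_j\}$ is a non-empty open subset of $U'$, hence dense. Define $U$ as the intersection of these open dense subsets over $j=1,\ldots,\d-1$; as a finite intersection of open dense subsets of an irreducible variety, $U$ is itself open and dense in $U'$, and $\mu$ is constant on it, so $\C[x]\cong\C[y]$ as $\C$-algebras for all $x,y\in U$. The main technical point is establishing the upper-semicontinuity of the $g_j$ via the Sylvester rank interpretation; once this is in place the argument reduces to intersecting finitely many generic open conditions on the irreducible variety $U'$.
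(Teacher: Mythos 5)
The overall strategy you adopt — encode the isomorphism type of $\C[x]$ by a partition of $\d$, show the partition varies semicontinuously with $x$, and invoke the irreducibility of $U'$ to extract a constant generic value — is precisely the strategy of the paper. The preliminary reductions (minimal polynomial $P_x$ with coefficients regular in $x$, CRT reduction to the unordered multiplicity partition) are also correct and match the paper. However, your specific numerical invariant $g_j$ is not what you claim it is, and this breaks the argument.

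The identity
\[
\deg_T \gcd\bigl(P_x(T),\, P_x^{(j)}(T)\bigr) \;=\; \sum_{i} \max\bigl(k_i(x)-j,\,0\bigr)
\]
holds for $j=0,1$ but fails in general for $j\ge 2$, because the $j$-th derivative $P^{(j)}$ can have \emph{accidental} roots in common with $P$ beyond those forced by the multiplicity pattern. Concretely, take $P(T)=T^3-T$, which is squarefree, so the partition is $(1,1,1)$ and the right-hand side equals $0$ for $j=2$; but $P''(T)=6T$ shares the root $T=0$ with $P$, so $\gcd(P,P'')=T$ and $g_2=1\neq 0$. Thus $g_j$ is \emph{not} a function of $\mu(x)$, and the claimed reconstruction of the conjugate partition from the differences $g_{j-1}-g_j$ does not hold as stated; all one can say is the inequality $g_j(x)\ge \sum_i\max(k_i(x)-j,0)$. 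Consequently the final step — that $\mu$ is constant on the locus where every $g_j$ attains its minimum — does not follow from what you have proved: for instance, for $\d=4$ the partitions $(3,1)$ and $(2,2)$ satisfy $g_j((2,2))\le g_j((3,1))$ for every $j$, so the constraint $g_j(x)=c_j$ alone does not pin down $\mu(x)$ without further input. (The upper-semicontinuity of $g_j$ via Sylvester ranks, and the use of irreducibility to show $U$ is open and dense, are fine.)

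The gap is repairable within your framework by replacing the $j$-th derivative with the \emph{iterated} gcd: set $D_0:=P_x$, $D_{l+1}:=\gcd(D_l,D_l')$, and $\tilde g_j(x):=\deg_T D_j$. Then $D_j=\prod_i(T-z_i)^{\max(k_i-j,0)}$ holds exactly (since $\gcd(Q,Q')$ drops each multiplicity by one without introducing spurious roots), so $\tilde g_j$ genuinely depends only on $\mu(x)$ and the differences $\tilde g_{j-1}-\tilde g_j$ recover the conjugate partition. Each $\tilde g_j$ is still upper-semicontinuous (rank conditions on successive Sylvester-type matrices built from the $D_l$), and the rest of your argument then goes through. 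The paper instead works directly with the map $\psi:U'\to\Gamma$ into the poset of partitions ordered by refinement, proves lower semicontinuity of $\psi$, and uses irreducibility to find the unique maximal $\gamma$ — a mild structural variant, but the numerical-invariant route you chose needs the correction above to be sound.
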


\begin{proof}[Proof of the Claim]
For each $x\in U'$, there is a canonical map $\C[T]\to\C[x]$, where $T$ is a variable, by sending
$T\mapsto x$. Thus, $\C[x]\cong\C[T]/(P_x(T))$ for a unique monic polynomial $P_x(T)$ of degree $\d$.
The coefficients of $P_x(T)$ depend algebraically on $x$.
Let $S^\d\C$ denote the $\d$-th symmetric product of $\C$.
Then one can parameterize all monic polynomials of degree $\d$ either by $\C^\d$ (using the coefficients, excluding the leading one),
or by $S^\d \C$ (using the $\d$ roots of the polynomial, counting multiplicities); and the two spaces $S^\d\C$ and $\C^\d$ are isomorphic.

We define the map $U\to S^\d\C$ by sending $x$ to the multiset of complex solutions of $P_x(T)$.
Each multiset consists of $\d$ elements, hence gives rise to a partition of $\d$ by counting the
multiplicities of different elements.
Denote $\Gamma$ as the set of all partitions of $\d$, then the above process defines a map $S^\d\C \to \Gamma$.
Use $\psi$ to denote the composition $U\to S^\d\C \to \Gamma$. Define a partial order $\preceq$ on $\Gamma$ by
$\gamma\preceq\gamma'$ if $\gamma'$ is a refinement of $\gamma$.
Also, define $U_\gamma = \{ x\in U'\ |\ \psi(x)\preceq \gamma\}$.

Notice that each inequality $\gamma\prec\gamma'$ (this notation means $\gamma\preceq\gamma'$ but $\gamma\ne\gamma'$)
can be factored into a sequence of minimal refinements
such that each partition in the sequence is obtained by decomposing one number in the
previous partition as the sum of two positive numbers. For instance, we may have
$\gamma=\{\nu_1,\nu_2\}$ and $\gamma'=\{\nu'_1,\nu''_1,\nu_2\}$ where $\nu'_1+\nu''_1=\nu_1$
as an example of a minimal refinement. Then an element $x\in U_{\gamma'}$ will have the corresponding
polynomial $P_x(T)$ of the form $P_x(T)=(T-\alpha_0)^{\nu'_1}(T-\alpha_1)^{\nu''_1}(T-\alpha_2)^{\nu_2}$,
where the $\alpha_i$'s are not necessarily different. And $x\in U_\gamma\subset U_{\gamma'}$
will then defined by the close condition $\alpha_0=\alpha_1$ in $U_{\gamma'}$.
Generalizing the above instance, one can obtain the conclusion that if $\gamma\prec\gamma'$,
then $U_\gamma\subset U_{\gamma'}$ is a closed subset.
Therefore, with respect to this partial order, $\psi$
is lower semicontinuous.

Lower semicontinuity implies that we can find $\gamma_1,\cdots,\gamma_m\in\Gamma$, no two of them are comparable under ``$\preceq$'',
such that if we define $U_i = \{ x\in U'\ |\ \psi(x)\preceq \gamma_i\}$, then each $U_i$ is closed in $U'$, and $U'=\cup_{i=1}^m  U_i$.
However, since $U'$ is an open dense subset of the irreducible space $V$, $U'$ is itself irreducible.
This implies that $m=1$, and there
is a unique partition $\gamma\in\Gamma$, and an open dense subset $U\subset U'$ such that $U=\{x\in U'\ |\ \psi(x)=\gamma\}$.

Finally, since we have $\C[T]/((T-\alpha)^k)\cong\C[T]/(T^k)$ for all $\alpha\in\C$ (via the isomorphism $T\mapsto T+\alpha$),
the isomorphic class of $\C[x]$ for $x\in U$ only depend on the multiplicity of different roots of $P_x(T)$,
which is recorded as $\psi(x)$. Since
$\psi(x)=\gamma$ for all $x\in U$, we conclude that for all $x\in U$, $\C[x]$ are isomorphic to each other.
\end{proof}

Fix a monic polynomial $P_0(T)$ of degree $\d$ whose roots gives rise to the partition $\gamma$.
Then for each $x\in U$, we have the isomorphism
\[
\phi_x:V_0:=\C[T]/(P_0(T))\longrightarrow \C[x]\cong\C[T]/(P_x(T)).
\]
Notice that $\phi_x$ depend algebraically on $x$.

Next, choose a generic affine (i.e, a translation of a linear) subspace $L\subset V$ of dimension $d-\d$, such that
$L\cap U$ is open and dense in $L$. Then for a generic $x\in U$, $\C[x]$ intersects $L$
at a single point. One defines the following birational map
\[
\begin{split}
\Phi: L\times V_0 & \dashrightarrow  V   \\
         ( x, v ) & \longmapsto      \phi_x(v) \in \C[x]\subset V,
\end{split}
\]
whose inverse is given by
\[
\begin{split}
\Psi: V & \dashrightarrow L\times V_0\\
      y & \longmapsto \left(x=\C[y]\cap L\, ,\, \phi_x^{-1}(y)\right)~.
\end{split}
\]

Observe that it also induces a birational map
$$
\Phi : \P^{d-\d} \times \P^{\d} \dto \P(V)~.
$$
Since $\Phi$ is birational, one can lift $f_V$ to a product map
$\tilde{f}_V : \P^{d-\d} \times \P^{\d} \dto \P^{d-\d} \times \P^{\d}$
which acts as $\tilde{f}_V=\id\times f_{V_0}$.

Therefore, we conclude the following


\begin{thm}
Suppose $V$ is power associative. Let $k=\dim_\C\left(\red\C[x]\right)$ for any $x\in U$
as described above.
Then for any $p$, we have
$$
\deg_p (f_V^n)  \asymp \left(2^{\min \{ p, k \}}\right)^n
$$
\end{thm}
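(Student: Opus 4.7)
The plan is to leverage the birational conjugation $\Phi : L \times V_0 \dashrightarrow V$ just constructed, together with the relation $\Phi \circ \tilde{f}_V = f_V \circ \Phi$ where $\tilde{f}_V = \id_L \times f_{V_0}$. This reduces the computation of $\deg_p(f_V^n)$ to that of a genuine product of the identity on $L$ and the squaring map on the abelian algebra $V_0$, at which point the theorem of Section~\ref{sec:square_abelian} delivers the answer.

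I would proceed in three steps. First, by the birational invariance of the asymptotic degree sequence and its independence from the choice of ample divisor (\cite{DS,Tru,NB}), one has $\deg_p(f_V^n) \asymp \deg_{D,p}(\tilde{f}_V^n)$ on the projective compactification $\P^{d-\d} \times \P^\d$ of $L \times V_0$, with $D$ the sum of the two pulled-back hyperplane classes. Second, since each $\phi_x$ is a ring isomorphism, one has $f_V(\phi_x(v)) = \phi_x(v^2)$, so $\tilde{f}_V$ is genuinely the product $\id_L \times f_{V_0}$; formula~\eqref{eq:product} then yields
$$
\deg_{D,p}(\tilde{f}_V^n) \asymp \max_{i+j=p} \deg_i(\id_L^n)\,\deg_j(f_{V_0}^n) = \max_j \deg_j(f_{V_0}^n),
$$
the index $j$ ranging over $\max\{0,\,p-(d-\d)\} \le j \le \min\{p,\d\}$. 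Third, $V_0 = \C[T]/(P_0(T))$ is abelian, so the theorem of Section~\ref{sec:square_abelian} gives $\deg_j(f_{V_0}^n) \asymp (2^{\min\{j,k_0\}})^n$ with $k_0 := \dim_\C \red(V_0)$. Since $\C[x] \cong V_0$ as $\C$-algebras for every $x \in U$, we have $k_0 = k$. The function $j \mapsto 2^{\min\{j,k\}}$ is non-decreasing in $j$ and $k \le \d$, so the maximum is attained at $j = \min\{p,\d\}$ and equals $2^{\min\{p,k\}}$, yielding the advertised asymptotic $(2^{\min\{p,k\}})^n$.

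The main obstacle is essentially bookkeeping around the compactifications: one needs $\Phi$, which was produced as a birational map between affine spaces, to extend to a birational map between the chosen projective compactifications intertwining $\tilde{f}_V$ with $f_V$, and one needs formula~\eqref{eq:product} to apply verbatim to $\tilde{f}_V$ on that product compactification. Both are routine given the explicit polynomial description of $\Phi$, and the birational/ample-divisor invariance of $\{\deg_p(\cdot^n)\}$ absorbs any harmless ambiguity in the choice of model.
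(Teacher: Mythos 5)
Your proposal is correct and follows essentially the same route as the paper: the paper constructs the birational conjugation $\Phi : \P^{d-\d}\times\P^\d \dashrightarrow \P(V)$ intertwining $f_V$ with the product $\id\times f_{V_0}$, and then leaves the remaining steps (birational invariance of the degree asymptotics, the product-map formula~\eqref{eq:product}, the reduction to the abelian theorem of Section~\ref{sec:square_abelian}, and the bookkeeping with the index range for $j$) implicit, which is exactly what you spell out.
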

\mbox{}\hfill\qed

%
%

\begin{ex}
Let $V=M_m(\C)$ be the algebra of $m\times m$ complex matrices. For a matrix $A$ with $m$ distinct
eigenvalues $\mu_1,\ldots,\mu_m$, its characteristic polynomial is also the minimal polynomial.
Thus $\C[A]\cong \oplus_{i=1}^m \C [x] /( x-\mu_i)\cong \C^m$.
Notice that ``having $m$ distinct eigenvalues'' is a generic property in $M_m(\C)$.
Therefore, for the matrix algebra $M_m(\C)$, the number $k$ in the theorem is equal to $m$.

\end{ex}


\section{Generalization to rational maps}

\subsection{Polynomial maps}
Suppose $V$ is power associative and pick any polynomial $P\in \C[T]$.
Then one can look at the map $f_P(v) = P(v)$ on $V$ and compute the
degree growth of $f_P$ on the affine space $V$.

\begin{thm}
Suppose $V$ is power associative, and pick $P\in \C[T]$.
Then there exists $k$ such that for any $p$, we have
$$
\deg_p (f_P^n)  \asymp \left(\deg(P)^{\min \{ p, k \}}\right)^n
$$
\end{thm}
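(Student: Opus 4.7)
I would run the same two-stage reduction used for the squaring map in Sections~\ref{sec:square_abelian} and~\ref{sec:pwassoc}, replacing $T^2$ by $P$ throughout and controlling the resulting skew-product terms. In the first stage, I invoke the birational equivalence $\Phi : L \times V_0 \dashrightarrow V$ of Section~\ref{sec:pwassoc}, where $V_0 \simeq \C[T]/(P_0(T))$ is abelian of dimension $\d$. Since $P \in \C[T]$, each subalgebra $\C[x]$ is $f_P$-invariant, so the lift $\Phi^{-1} \circ f_P \circ \Phi$ is the product $\id_L \times f_{P,V_0}$. By the birational invariance of the $\deg_p$ and the product formula~\eqref{eq:product}, this reduces the problem to estimating $\deg_j(f_{P,V_0}^n)$ for $0 \le j \le \min(p,\d)$ on the abelian algebra $V_0$.

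In the second stage I use the Artin decomposition $V_0 \simeq \prod_{i=1}^{k} V_i$ into local factors $V_i = \C[T]/((T-z_i)^{k_i})$, where $k = \dim_\C \red V_0$. Because multiplication in the product is coordinatewise, $f_{P,V_0}$ factors as $\prod_{i=1}^{k} f_{P,V_i}$; after replacing $\P^{\d}$ by the birationally equivalent compactification $\prod_i \P^{k_i}$, a second application of~\eqref{eq:product} gives
\[
\deg_p(f_{P,V_0}^n) \asymp \max_{p_1 + \cdots + p_k = p}\, \prod_{i=1}^{k} \deg_{p_i}(f_{P,V_i}^n).
\]
The key claim is then that for every local factor $V_i$ one has $\deg_{p_i}(f_{P,V_i}^n) \asymp D^n$ for all $1 \le p_i \le k_i$, with $D = \deg P$. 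Granting this, the right-hand side becomes $\max D^{n \cdot |\{i : p_i \ge 1\}|}$, maximized by making as many $p_i$'s nonzero as possible; this yields $D^{n \min(p,k)}$ and, together with the first stage, proves the theorem.

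The main obstacle is the key claim about local factors, which unlike the squaring case is no longer a product map in disguise. To handle it, I would use the exp--log birational conjugation of Section~\ref{sec:square_abelian}, writing a generic element of $V_i$ as $v = a \exp(h)$ with $a \in \C^\ast$ and $h \in \mathfrak{m}_i$. A direct computation then yields
\[
f_{P,V_i}(a,h) \;=\; \bigl(P(a),\; \log[P(a\exp h)/P(a)]\bigr),
\]
exhibiting $f_{P,V_i}$ as a skew product over the one-dimensional polynomial dynamics $a \mapsto P(a)$ of degree $D$, with $(k_i-1)$-dimensional nilpotent fibers. When $P = T^D$ the fiber part collapses to the linear map $h \mapsto Dh$ and~\eqref{eq:product} immediately gives all degrees $\asymp D^n$. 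For general $P$ the fiber map depends polynomially on the base coordinate $a$; to control this I would prove a higher-dimensional analogue of Proposition~\ref{lem:deg_growth} by the same graph-intersection argument, tracking $(k_i-1)$-dimensional fibers instead of $\P^1$-fibers, and check that the base-dependent correction terms contribute only polynomial factors and do not inflate the exponential rate $D^n$ in any codimension $p_i \le k_i$.
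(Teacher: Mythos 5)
Your first two reduction stages (power-associative $\to$ abelian via $\Phi : L \times V_0 \dashrightarrow V$, then Artin factorization and the product formula~\eqref{eq:product}) match the paper, and you correctly identify the key local claim: that on each factor $V_i = \C[T]/(T^{k_i})$ one has $\deg_{p_i}(f_{P,V_i}^n) \asymp D^n$ for all $1 \le p_i \le k_i$. The divergence, and the gap, is in how you prove that local claim. The paper does \emph{not} use the exp--log conjugation here; that conjugation only linearizes the fiber map for pure powers $P = T^D$. For general $P$, your formula $f_{P,V_i}(a,h) = (P(a), \log[P(a\exp h)/P(a)])$ is correct, but it introduces denominators $P(a)$ into the fiber map and leaves it genuinely base-dependent, so the change of variables buys nothing and in fact adds indeterminacy to manage. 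More importantly, your parenthetical assertion that the base-dependent terms ``contribute only polynomial factors'' is incorrect and points to the real danger: for the $n$-th iterate, the degree of the fiber component in the base variable (the quantity called $\delta_d$ in Proposition~\ref{lem:deg_growth}) is itself $\asymp D^n$ since $f_P^n = f_{P^n}$ and $\deg(P^n) = D^n$. So these ``corrections'' grow exponentially, not polynomially. That they do not inflate the answer beyond $D^n$ is a genuine cancellation phenomenon particular to skew products with linear fibers, and has to be tracked carefully, not dismissed.

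The paper's route around this is more elementary than the ``higher-dimensional analogue of Proposition~\ref{lem:deg_growth}'' you propose and leave unproved. Working in the natural nilpotent-filtration coordinates $v = \sum_{j=0}^{m-1}\lambda_j x^j$ (not exp--log), one expands $P(v)$ and finds that the $j$-th coordinate of $f_P$ has the form $\lambda_j P'(\lambda_0) + Q_j(\lambda_0,\ldots,\lambda_{j-1})$, which is affine in $\lambda_j$ and polynomial in the earlier variables. Hence $f_P$ preserves the increasing filtration $V^{(0)} \subset V^{(1)} \subset \cdots \subset V^{(m-1)}$, each $V^{(i)} \to V^{(i-1)}$ is a $\P^1$-fibration, and the map on each fiber is a M\"obius transformation. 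One then applies Proposition~\ref{lem:deg_growth} (which is specifically a $\P^1$-fiber statement) \emph{inductively} up the tower, with $\delta_1 = 1$ and $\delta_d \asymp D^n$, controlling a single extra dimension at each step. This avoids entirely the need for a new fiber-dimension lemma. If you pursue your version, you would have to both prove the higher-dimensional skew-product degree formula you invoke (the graph-intersection argument does not carry over verbatim: on $\P^d \times \P^e$ with $e\ge 2$ you no longer have $H_1^2 = 0$, and more mixed classes intervene) and replace the ``polynomial correction'' heuristic with an actual estimate showing that the relative degree of the fiber map stays bounded. As it stands these are gaps, not proofs.
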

\begin{proof}
By the same trick as in the previous section, we first treat the case that
$V$ is abelian and generated by one element. That is we assume
$$
V \cong \C[T] / \prod_{i=1}^l (T - z_i)^{m_i} \cong \prod_{i=1}^l \C [T_i] /( T_i^{m_i} )~.
$$
Since $f_P$ preserves each factor of the product decomposition above,
it is sufficient to treat the case $V = \C[x]/ (x^m)$.
A point in $V$ can then be written as $\sum_{i=0}^{m-1} \la_i x^i$
and if $P(T) = \sum_j a_j T^j$ we get
\begin{multline*}
P \left( \sum_{i=0}^{m-1} \la_i x^i \right) =
\sum_j 
  a_j  \left( \sum_{i=0}^{m-1} \la_i x^i \right)^j
\\
=P(\lambda_0) + x \left[ \lambda_1 P'(\la_0) \right] + x^2 \left[\la_2 P'(\la_0) + {\frac 1 2} \la_1^2 P''(\la_0) \right]+ \ldots \\
+ x^j \left[\la_j P'(\la_0) + Q_j(\la_0,\la_1, \ldots , \la_{j-1})\right] + \ldots \\
= P(\lambda_0) + \sum_{j=1}^{m-1} x^j \bigl( \la_j P'(\la_0) + Q_j(\la_0,\la_1, \ldots , \la_{j-1})\bigr)
\end{multline*}
In other words, we may write
\begin{align*}
f_V ( \la_0, \la_1 , \ldots , \la_{m-1}) = \bigl(\ & P(\lambda_0) ,  \lambda_1 P'(\la_0), \la_2 P'(\la_0) + \textstyle{\frac 1 2} \la_1^2 P''(\la_0), \\
  & \ldots ,   \lambda_j P'(\la_0) + Q_j (\la_0,\ldots,\la_{j-1}) ,  \\
  & \ldots ,  \la_{m-1} P'(\la_0) + Q_{m-1}(\la_0,\la_1, \ldots , \la_{m-2})\ \bigr)
\end{align*}

Observe that the constant term (i.e., the first coordinate of the function) is just $P(\la_0)$, and for
$1\le j\le m-1$ the coefficient of $x^j$ (i.e., the $j$-th coordinate of the function)
has the following two properties:
\begin{itemize}
\item[(1)] it is a polynomial function of $\la_0,\la_1,\ldots,\la_j$;
\item[(2)] it is an affine function in $\la_j$.
\end{itemize}

If for $0\le i\le m-1$ we let $V^{(i)}=\C^{i+1}$ be the first $i+1$
coordinates of $V$, then by observation (1)
the map $f_P$ induces a selfmap on each $V^{(i)}$. Moreover, for $1\le i\le k-1$ let $\pi_i: V^{(i)}\to V^{(i-1)}$
be the projection, then $f_P|_{V^{(i)}}$ preserve the fibration $\pi_i$ and the map on a generic fiber is
a linear isomorphism of $\P^1$ by (2) above. Moreover, for each $i\ge 1$,
the function on the last coordinate
$(f_P)_i:V^{(i)}\to V^{(i)}/V^{(i-1)}\cong \C$,
as a function of $\la_0,\cdots,\la_{i-1}$, is of degree either
$\deg(P)-1$ (for $i=1$) or $\deg(P)$ (for $i\ge 2$).

When we pass to the $n$-th iterate, we can see that $f_P^n= f_{P^n}$ and $\deg(P^n)=\deg(P)^n$.
We then use Proposition~\ref{lem:deg_growth} repeatedly for $i=1,\cdots,m-1$.
Notice that by (1) above, the number
$\delta_1$ in Proposition~\ref{lem:deg_growth} always equals to one (for each $i$); by the previous paragraph,
the number $\delta_d$ in Proposition~\ref{lem:deg_growth} is asymptotic to $\deg(P)^n$.
This implies that the degree growth for each $p$ is indeed
\[
\deg_p (f_P^n)  \asymp \deg(P)^{pn}.
\]
This is for the special case of $\C[x] \cong \C[T]/ (T^m)$. If $V = \C[x]$, then the same formula is true 
by~\eqref{eq:product} since we have
$\C[x] \cong \prod_i \C [T_i] /( T_i^{m_i} )$. 

Finally let $k$ be the dimension of $\red\C[x]$ for a generic $x\in V$.
By the same argument as in \S\ref{sec:pwassoc}, we know that for generic $x,y$, we have
$\C[x]\cong \C[y]$, and we can further use this fact to make $f_P$ a product map.
Therefore, we can conclude again that we have
\[
\deg_p (f_P^n)  \asymp \left(\deg(P)^{\min \{ p, k \}}\right)^n.
\]
\end{proof}

\subsection{Rational maps}

Next, we claim that for a rational function $\varphi(T)=\frac{Q(T)}{P(T)}\in \C[T]$,
we have the same result for degree growth
for the induced map $f(v):=P(v)^{-1}Q(v)$ on $V$. We will show in a moment that for a generic $v\in V$,
$P(v)$ is invertible, thus $f$ induces a dominant rational map from $V$ to itself.

First, assume $V = \C[x]\cong \C[T]/(T^m)$, $v=\sum_{i=0}^{m-1} \la_i x^i$,
and $P(x) = \sum_i a_i x^i$.
We get
\[
P(v)
= P(\lambda_0) + \sum_{j=1}^{m-1} x^j \bigl( \la_j P'(\la_0) + Q_j(\la_0,\la_1, \ldots , \la_{j-1})\bigr)
\]
If $P(\la_0)\ne 0$, which is the generic case, then
$P(v)-P(\la_0)\in xV$ is nilpotent. Thus $P(v)$ is invertible, and its inverse is given by
\begin{align*}
P(v)^{-1}
&=& \left\{P(\lambda_0)\left(1+\sum_{j=1}^{m-1} x^j
                         \cdot \frac{\la_j P'(\la_0) + Q_j(\la_0,\la_1, \ldots , \la_{j-1})}
                         {P(\la_0)}\right)\right\}^{-1}\\
&=& P(\lambda_0)^{-1}\cdot \sum_{i=0}^{m-1}\left(-\sum_{j=1}^{m-1} x^j
                         \cdot \frac{\la_j P'(\la_0) + Q_j(\la_0,\la_1, \ldots , \la_{j-1})}{P(\la_0)}\right)^i
\end{align*}

Expanding the last line, we get a polynomial expression in $x$.
In order to find the coefficient for $x^j$, we observe that in the expansion, $x^j$ can be formed by
products of terms coming from $x^{j_1}, \ldots,  x^{j_\ell}$ with $j_1+\ldots + j_\ell=j$. If, say $j_1=j$, then all the others satisfy $j_i=0$.
However, since $j$ starts from $1$ in the sum, this
 means we are looking at the term of $x^j$ in the linear term $i=1$,
and the contribution of the coefficient from that product is
$$\la_j \, \frac{-P'(\la_0)}{P(\la_0)^2}+\frac{-Q_j(\la_0,\cdots,\la_{j-1})}{P(\la_0)^2}~.$$
If all $j_i < j$, then the contribution of the coefficient
from that product is a polynomial in $\la_1,\ldots,\la_{j-1}$ for a generic $\la_0$, and
is rational in $\la_0,\la_1,\cdots,\la_{j-1}$. More precisely, the contribution from that
product is of the form
\[
\frac {\widetilde{Q}_j(\la_1,\cdots,\la_{j-1})}{P(\la_0)^\ell},
\]
and since $1\le \ell\le m$,
the degree of this rational function is equivalent to $\deg(P)$  asymptotically.
Therefore, we conclude that the coefficient for $x^j$ is a linear function in $\la_j$
and is a rational function in $\la_0,\cdots,\la_{j}$ of degree asymptotically equivalent to
$\deg(P)$.

Furthermore, if we expand the product
$f_\varphi(v) = P(v)^{-1}Q(v)$ and look at the coefficient of $x^j$,
then the argument in the previous paragraph can also be applied. We have that the constant term
is $Q(\la_0)/P(\la_0)$. For $1\le j\le m-1$, the coefficient of $x^j$
is a rational function in $\la_0,\cdots,\la_j$ of degree asymptotic equivalent to
$\deg(\varphi):=\max\{\deg(P),\deg(Q)\}$,
and for generic $\la_0$ it
is a linear function of $\la_j$. Thus using the same notation as in the polynomial case,
$f$ induces a selfmap on each $V^{(i)}$ and for $2\le i\le m$, $f_P|_{V^{(i)}}$ preserve the
fibration $\pi_i$ and the map on a generic fiber is a linear isomorphism of $\P^1$.
Moreover, we have $f_{\varphi}^n=f_{\varphi^n}$, where $\varphi^n$ means we iterate the single
variable rational map $\varphi(T)$ for $n$ times. Also, notice that
$\deg(\varphi^n)=\deg(\varphi)^n$.
Thus we can use Proposition~\ref{lem:deg_growth} on each $\pi_i$ inductively again. To conclude, we obtain
in this case that $\C[x] \cong \C[T]/ (T^m)$, and we have
$\deg_p (f_\varphi^n)  \asymp \max\{ \deg(Q), \deg(P)\}^{pn}$.

In general,  we can write
$\C[x] \cong \prod_i \C [T_i] /( T_i^{m_i} )$, and for generic $x,y$, we have
$\C[x]\cong \C[y]$, and $f_P$ is birational to a product map.
Therefore, we can conclude again that for $k=\dim_\C\left(\red\C[x]\right)$, where $x$
is generic in $V$, for any $p$, we have
$$
\deg_p (f_\varphi^n)  \asymp \left( \max\{ \deg(Q), \deg(P)\}^{\min \{ p, k \}}\right)^n
$$
This completes the proof of Theorem A.\hfill\qed


\section{Maps of several variables}

In this section, we will assume that $V$ is an abelian (i.e., commutative, associative and unitary)
 $\C$-algebra, and $\dim_{\C}(V)=k$.
Take any dominant rational map
\[
f=(f_0:\cdots:f_d):\C^d\dashrightarrow\C^d,
\]
where each $f_j$ is a rational function.
Interpreting the multiplication and inverse in $f_j$ as multiplication and inverse in $V$
(notice that a generic element in $V$ is invertible),
$f$ also induces a rational map $F:V^d\dashrightarrow V^d$.

As we saw in Section~\ref{sec:square_abelian}, $V$ is Artinian and can be factored as
a product $V=\prod_{i=1}^m V_i$ of local Artinian $\C$-algebras. Let $\frm_i$ be the
maximal ideal of $V_i$, $\pi_i:V_i\to V_i/\frm_i\cong\C$ be the quotient map, and
$F_i:V_i^d\dashrightarrow V_i^d$ be the component of $F$ on $V_i$.
Then one sees that the fibration defined by
$\prod\pi_i:V_i^d\to\C^d$ is preserved by $F_i$, and
the induced map on the base $\C^d$ is exactly $f$. That is, the following diagram of maps
is commutative.
\[
\xymatrix{
V_i^d \ar@{-->}[rr]^{F_i}\ar[d]_{\prod\pi_i} && V_i^d \ar[d]^{\prod\pi_i} \\
\C^d \ar@{-->}[rr]_{f} && \C^d
}
\]
Coming back to $V$, we can in fact describe the fibration more succinctly using
the quotient map $\pi: V\to V/N(V)\cong\C^m$. That is, the fibration is given by
$\prod_{i=1}^k\pi : V^d \to (V/N(V))^d\cong\C^{md}$, and $F$ is preserving the fibration in
the sense that the following diagram is commutative.
\[
\xymatrix{
V^d \ar@{-->}[rr]^{F}\ar[d]_{\prod\pi} && V^d \ar[d]^{\prod\pi} \\
\C^{md} \ar@{-->}[rr]_{\prod f} && \C^{md}
}
\]
In particular, if $N(V)\ne (0)$, then $\prod\pi$ is a fibration with positive dimensional fibers.
In the following, we show that
in the special case of monomial maps, $F$ is moreover a product map.

\subsection{Generalized monomial maps}
Given an $d\times d$ integer matrix
$A=(a_{ij})\in M_d(\Z)$ with $\det(A)\ne 0$, we define the (generalized) monomial map
$F_A:V^d \dashrightarrow V^d$ as
\[
\textstyle{ F_A(x_1,\cdots,x_d)=\left(\prod_j x_j^{a_{1,j}},\cdots ,\prod_j x_j^{a_{d,j}} \right),  }
\]
where $x_j\in V$. We then compactify $V^d\subset \P^{dk}$ and lift the monomial map
as $F_A:\P^{dk}\dashrightarrow \P^{dk}$.

The goal of this section is to prove Theorem B. That is,
we will compute the degree growth of the generalized
monomial map $F_A$. The method we use is similar to Section~\ref{sec:square_abelian} when we
dealt with the squaring map on an abelian algebra.

Now $V$ is finite dimensional as a complex vector space, hence is Artinian as a $\C$-algebra,
hence can be decomposed as
a finite direct product of Artinian local rings $V \simeq \prod_{i=1}^m V_i$.

When $V$ is an Artin local ring with maximal ideal $\frm$, then $\frm$ is nilpotent,
i.e. $\frm^l=0$ for some $l\ge 1$, and $V/\frm\simeq \C$.
As a $\C$-vector space, we can write $V$ as the
direct sum $V\simeq \C\oplus \frm$, where $\frm$ is the maximal ideal of $V$.
Introduce the map
\[
\Phi : \C \oplus \frm \longrightarrow V
\]
sending $(a,h)$ to $a\cdot \exp(h)$, where $\exp(h)=\sum_{i=0}^\infty h^i/i!$.
The function $\exp(h)$ is well defined since the sum is indeed finite, as we explained before.
Moreover, $\Phi$ is a birational map with birational inverse
\[
\Phi^{-1}(a,h)= a\cdot \log(1+a^{-1}h).
\]
Here $\log(1+x)=\sum_{i=1}^\infty (-1)^{i-1}x^i/i$. Notice that the $\Phi^{-1}$ above defines a map $\C \oplus \frm \longrightarrow V$, but using the identification $V\simeq\C \oplus \frm $ (as vector spaces) we can interpret it as a map $V \longrightarrow \C \oplus \frm $. Moreover, one can check that it indeed is the rational inverse of $\Phi$ when we interpret it this way.

For $d$ copies of $V$, we have, as $\C$-vector spaces, $V^d \simeq \C^d \oplus \frm^{\oplus d}$. We use
$\frm^{\oplus d}$ to stress that it is the direct sum of $d$ copies of $\frm$, not the usual
power of ideal $\frm^d$.
Then, the map $F_A$, after conjugating $\Phi$, becomes a product of a monomial map and a linear map, i.e.,
\[
 F_A\circ \Phi =  \Phi\circ ( f_A , T_A),
\]
where $f_A:\C^d\dashrightarrow\C^d$ is the usual monomial map on $\C^d$ induced by $A$, and $T_A$ is the
linear map given by $A$ on the vector space $\frm^{\oplus d}$. 
The linear map $T_A$ has degree $1$ in codimension $0\le j\le \dim(\frm^{\oplus d}) = d(k-1)$, and degree $0$ for $j > d(k-1)$. The degree growth of the usual monomial map $f_A$ is shown
as
\[
\deg_p(f_A) \asymp \left\| \wedge^{p} A^n \right\|,
\]
where $\|\cdot\|$ is any norm (for the proof of this result, see \cite{FW,L}).

Therefore, by \eqref{eq:product}, the degree growth of the map $F_A$ can be described as
\[
\deg_p(F_A^n) \asymp \max_{i+j=p} \{ \deg_i(f_A^n)\deg_j(T_A^n) \} 
\asymp \max_{p+d-dk\le i\le p}
\left\| \wedge^{i} A^n \right\|.
\]
Here, we use the convention that negative exterior product is zero.

In the general case, we have $V \simeq \prod_{i=1}^m V_i$ with each $V_i$ being an Artin local ring.
Hence $F_A:V^d\dashrightarrow V^d$ is a product of maps $F_{A,i}:V_i^d\dashrightarrow V_i^d$, and each
$F_{A,i}$ has a further product structure as explained above.
The map $F_A$ is the again a product of usual monomial maps and a linear map.
The monomial map maps $\C^{md}\to\C^{md}$ and is associated to the matrix
\[
\left(
\begin{matrix}
A & 0 & \cdots & 0 \\
0 & A & \cdots & 0 \\
\vdots & \vdots & \ddots & \vdots \\
0 & 0 & \cdots & A
\end{matrix}
\right).
\]

Let $\diag(A;m)$ denote the above block-diagonal matrix with
$m$ blocks of the matrix $A$ on the diagonal positions. The degree growth behavior of $F_A$ is
therefore as follows.
\[
\deg_p(F_A^n) \asymp \max_{p-d(k-m)\le i\le p} \left\| \wedge^{i} \diag(A;m)^n \right\|.
\]
Notice that the number $m$ is the number of copies of $V_i$'s in the decomposition for $V$.
Another way to write $m$ is as
$m=\dim_\C(\red(V))$, where $\red(V)=V/N(V)$
and $N(V)$ is the nilradical of $V$, i.e., the ideal of all nilpotent elements.
This completes the proof of Theorem B.


\begin{bibdiv}
\begin{biblist}

\bib{AABM}{article}{
   author={Abarenkova, N.},
   author={Angl{\`e}s d'Auriac, J.-Ch.},
   author={Boukraa, S.},
   author={Maillard, J.-M.},
   title={Growth-complexity spectrum of some discrete dynamical systems},
   journal={Phys. D},
   volume={130},
   date={1999},
   number={1-2},
   pages={27--42},
   issn={0167-2789},
}

\bib{AMV}{article}{
   author={Angl{\`e}s d'Auriac, J.-Ch.},
   author={Maillard, J.-M.},
   author={Viallet, C. M.},
   title={On the complexity of some birational transformations},
   journal={J. Phys. A},
   volume={39},
   date={2006},
   number={14},
   pages={3641--3654},
   issn={0305-4470},
}

\bib{AM}{book}{
   author={Atiyah, M. F.},
   author={Macdonald, I. G.},
   title={Introduction to commutative algebra},
   publisher={Addison-Wesley Publishing Co., Reading, Mass.-London-Don
   Mills, Ont.},
   date={1969},
   pages={ix+128},
}

\bib{EB}{article}{
   author={Bedford, Eric},
   title={The dynamical degrees of a mapping},
   conference={
      title={Proceedings of the Workshop Future Directions in Difference
      Equations},
   },
   book={
      series={Colecc. Congr.},
      volume={69},
      publisher={Univ. Vigo, Serv. Publ., Vigo},
   },
   date={2011},
   pages={3--13},
   review={\MR{2905566 (2012m:37080)}},
}

\bib{BK1}{article}{
   author={Bedford, Eric},
   author={Kim, Kyounghee},
   title={On the degree growth of birational mappings in higher dimension},
   journal={J. Geom. Anal.},
   volume={14},
   date={2004},
   number={4},
   pages={567--596},
   issn={1050-6926},
}

\bib{BK2}{article}{
   author={Bedford, Eric},
   author={Kim, Kyounghee},
   title={Degree growth of matrix inversion: birational maps of symmetric,
   cyclic matrices},
   journal={Discrete Contin. Dyn. Syst.},
   volume={21},
   date={2008},
   number={4},
   pages={977--1013},
   issn={1078-0947},
}

\bib{BHM}{article}{
   author={Boukraa, S.},
   author={Hassani, S.},
   author={Maillard, J.-M.},
   title={Noetherian mappings},
   journal={Phys. D},
   volume={185},
   date={2003},
   number={1},
   pages={3--44},
   issn={0167-2789},
}

\bib{CeDe}{article}{
   author={Cerveau, Dominique},
   author={D{\'e}serti, Julie},
   title={It\'eration d'applications rationnelles dans les espaces de
   matrices},
   journal={Conform. Geom. Dyn.},
   volume={15},
   date={2011},
   pages={72--112},
   issn={1088-4173},
}

\bib{NB}{article}{
   author={Dang, Nguyen Bac},
   title={Degrees of iterates of rational maps on normal projective varieties},
   status={preprint}
}

\bib{DF}{article}{
   author={Diller, J.},
   author={Favre, C.},
   title={Dynamics of bimeromorphic maps of surfaces},
   journal={Amer. J. Math.},
   volume={123},
   date={2001},
   number={6},
   pages={1135--1169},
   issn={0002-9327},
}

\bib{DN}{article}{
   author={Dinh, Tien-Cuong},
   author={Nguy{\^e}n, Vi{\^e}t-Anh},
   title={Comparison of dynamical degrees for semi-conjugate meromorphic
   maps},
   journal={Comment. Math. Helv.},
   volume={86},
   date={2011},
   number={4},
   pages={817--840},
   issn={0010-2571},
}

\bib{DNT}{article}{
   author={Dinh, Tien-Cuong},
   author={Nguy{\^e}n, Vi{\^e}t-Anh},
   author={Truong, Tuyen Trung},
   title={On the dynamical degrees of meromorphic maps preserving a
   fibration},
   journal={Commun. Contemp. Math.},
   volume={14},
   date={2012},
   number={6},
   pages={1250042, 18},
   issn={0219-1997},
}

\bib{DS}{article}{
   author={Dinh, Tien-Cuong},
   author={Sibony, Nessim},
   title={Une borne sup\'erieure pour l'entropie topologique d'une
   application rationnelle},
   journal={Ann. of Math. (2)},
   volume={161},
   date={2005},
   number={3},
   pages={1637--1644},
   issn={0003-486X},
}

\bib{FJ}{article}{
   author={Favre, Charles},
   author={Jonsson, Mattias},
   title={Dynamical compactifications of ${\bf C}^2$},
   journal={Ann. of Math. (2)},
   volume={173},
   date={2011},
   number={1},
   pages={211--248},
   issn={0003-486X},
}

\bib{FW}{article}{
   author={Favre, Charles},
   author={Wulcan, Elizabeth},
   title={Degree growth of monomial maps and McMullen's polytope algebra},
   journal={Indiana Univ. Math. J.},
   volume={61},
   date={2012},
   number={2},
   pages={493--524},
   issn={0022-2518},
}


\bib{L}{article}{
   author={Lin, Jan-Li},
   title={Pulling back cohomology classes and dynamical degrees of monomial
   maps},
   journal={Bull. Soc. Math. France},
   volume={140},
   date={2012},
   number={4},
   pages={533--549 (2013)},
   issn={0037-9484},
}

\bib{Ng}{article}{
   author={Nguy{\^e}n, Vi{\^e}t-Anh},
   title={Algebraic degrees for iterates of meromorphic self-maps of ${\mathbb
   P}^k$},
   journal={Publ. Mat.},
   volume={50},
   date={2006},
   number={2},
   pages={457--473},
   issn={0214-1493},
}

\bib{Og}{article}{
   author={Oguiso, Keiji},
   title={A remark on dynamical degrees of automorphisms of hyperk\"ahler
   manifolds},
   journal={Manuscripta Math.},
   volume={130},
   date={2009},
   number={1},
   pages={101--111},
   issn={0025-2611},
}


%

\bib{PiRu1}{article}{
   author={Pirio, Luc},
   author={Russo, Francesco},
   title={Quadro-quadric Cremona maps and varieties 3-connected by cubics:  semi-simple part
and radical},
   journal={International Journal of Mathematics},
   volume={24},
   number={13},
   date={2013},
   pages={1350105},
}

\bib{PiRu2}{article}{
   author={Pirio, Luc},
   author={Russo, Francesco},
   title={Quadro-quadric Cremona transformations in low dimensions via the JC-correspondence},
   journal={Ann. Inst. Fourier},
   volume={64},
   date={2014},
   pages={71-111},
}

\bib{PiRu3}{article}{
   author={Pirio, Luc},
   author={Russo, Francesco},
   title={The XJC-correspondence},
   journal={Journal f\"ur die reine und angewandte Mathe-
matik},
   volume={716},
   date={2016},
   pages={229--250},
}

\bib{MR1488341}{article}{
   author={Russakovskii, Alexander},
   author={Shiffman, Bernard},
   title={Value distribution for sequences of rational mappings and complex
   dynamics},
   journal={Indiana Univ. Math. J.},
   volume={46},
   date={1997},
   number={3},
   pages={897--932},
   issn={0022-2518},
}

\bib{Tru}{article}{
   author={Truong, Tuyen Trung},
   title={(Relative) dynamical degrees of rational maps over an algebraic closed field},
   eprint={arXiv:1501.01523 [math.AG]}
}

\bib{Usn}{article}{
   author={Usnich, Alexander},
   title={A discrete dynamical system acting on pairs of matrices},
   journal={Dokl. Nats. Akad. Nauk Belarusi},
   volume={53},
   date={2009},
   number={3},
   pages={21--24, 124},
   issn={0002-354X},
}

\end{biblist}
\end{bibdiv}

\end{document}